\theoremstyle{definition}
\newtheorem{theorem}{Theorem}
\newtheorem{lemma}[theorem]{Lemma}  
\newtheorem{prop}[theorem]{Proposition}
\newtheorem{corollary}[theorem]{Corollary}
\newtheorem{remark}[theorem]{Remark}
\numberwithin{equation}{section}
\numberwithin{theorem}{section}
\newcommand{\R}{\ensuremath{\mathbb{R}}}
\newcommand{\Z}{\ensuremath{\mathbb{Z}}}
\newcommand{\N}{\ensuremath{\mathbb{N}}}
\newcommand{\ER}{Erd\H{o}s-R\'enyi }
\begin{document}

\title
{A moment-generating formula for \ER component sizes}

\author { Bal\'azs R\'ath\textsuperscript{1}}

\footnotetext[1]{MTA-BME Stochastics Research Group, Budapest University of Technology and Economics, Hungary. E-mail: rathb@math.bme.hu}

\maketitle

\begin{abstract}
We derive a simple formula characterizing the distribution of the size
of the connected component of a fixed vertex in the \ER random graph which allows
us to give elementary proofs of some results of 
 \cite{tim_et_al} and \cite{janson_luczak} about the susceptibility  in the subcritical graph and the CLT \cite{pittel} for the size of the giant component in the supercritical graph.

\bigskip

\medskip

\noindent \textsc{Keywords:} \ER graph, generating function, susceptibility, giant component, central limit theorem \\
\textsc{AMS MSC 2010:} 60C05, 60F05, 82B26, 05A15
\end{abstract}

\section{Introduction}
\label{section_intro}

The \ER graph $\mathcal{G}_{n,p}$, introduced in \cite{erdos_renyi}, is the random graph on $n$ vertices where each pair of vertices
is connected with probability $p$, independently from each other. For an introduction
to this fundamental mathematical model of large networks, see \cite{bollobas_rg_book, janson_lu_ru_book, remco_book}.

We denote by $\mathbb{P}_{n,p}$ the law of $\mathcal{G}_{n,p}$ and $\mathbb{E}_{n,p}$ the corresponding expectation. 

We assume that the vertex set of $\mathcal{G}_{n,p}$ is $[n]=\{1,\dots,n\}$ and we denote by
$\mathcal{C}$ the connected component in $\mathcal{G}_{n,p}$ of the vertex indexed by $1$.
We denote by $|\mathcal{C}|$ the number of vertices  of $\mathcal{C}$.

For any $n \in \N$, $p \in [0,1]$,  $j \in \Z \cap (-n, +\infty)$,  and $k \in [n]$ we define
\begin{equation}\label{g_n_p_k}
g_{n,p}(j,k)= (1-p)^{jk} \prod_{i=0}^{k-1}   \frac{n-i+j}{ n -i } .
\end{equation}

The central result of this short note is the following  formula:

\begin{prop}\label{prop_ER_mgf_general} For any $n \in \N$, $j \in \Z \cap (-n, +\infty)$ and $p \in [0,1]$ we have
\begin{equation}\label{mgf_ER_component_positive}
\mathbb{E}_{n,p} \left[\, g_{n,p}(j,|\mathcal{C}| )
  \, \right]=\frac{n+j}{n}\left(1-\mathbb{P}_{n+j,p}[ \, |\mathcal{C}|>n \, ] \right).
\end{equation}
\end{prop}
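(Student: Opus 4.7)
The plan is to expand the expectation explicitly using the standard component-size formula
$$ \mathbb{P}_{n,p}[\,|\mathcal{C}|=k\,] = \binom{n-1}{k-1}\, C_k(p)\, (1-p)^{k(n-k)}, $$
where $C_k(p)$ denotes the probability that $\mathcal{G}_{k,p}$ is connected. This identity comes from choosing which $k-1$ of the remaining $n-1$ vertices belong to $\mathcal{C}$, requiring the induced subgraph on those $k$ vertices to be connected, and forbidding the $k(n-k)$ edges to the outside. Substituting this into the left-hand side of \eqref{mgf_ER_component_positive} turns the expectation into a finite sum over $k$.

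Next I would simplify the combinatorial prefactor. Writing out factorials,
$$ \binom{n-1}{k-1}\prod_{i=0}^{k-1}\frac{n-i+j}{n-i} = \frac{(n+j)!}{n\,(k-1)!\,(n+j-k)!} = \frac{n+j}{n}\binom{n+j-1}{k-1}, $$
and combining the two factors of $1-p$ gives $(1-p)^{jk}(1-p)^{k(n-k)} = (1-p)^{k(n+j-k)}$. Hence the summand becomes exactly $\frac{n+j}{n}\,\mathbb{P}_{n+j,p}[\,|\mathcal{C}|=k\,]$ by the same component-size formula, now applied to $\mathcal{G}_{n+j,p}$. Summing over $k\in\{1,\dots,n\}$ yields $\frac{n+j}{n}\,\mathbb{P}_{n+j,p}[\,|\mathcal{C}|\le n\,]$, which is the right-hand side of \eqref{mgf_ER_component_positive}.

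The only delicate point is the case $j<0$, where the telescoping identity above involves $(n+j-1)!$ for a possibly negative argument; I would handle this by observing that the product $\prod_{i=0}^{k-1}(n-i+j)$ contains the factor $0$ as soon as $k>n+j$, so $g_{n,p}(j,k)=0$ in that range. Thus the sum effectively runs only over $k\le \min(n,n+j)$, matching the fact that $\binom{n+j-1}{k-1}=0$ for $k>n+j$ in this range and that $|\mathcal{C}|\le n+j$ almost surely under $\mathbb{P}_{n+j,p}$. No analytic obstacle arises; the whole proof is essentially a matching of factorials between the two sides, and the main thing to keep straight is the telescoping identity and the bookkeeping in the degenerate $j<0$ case.
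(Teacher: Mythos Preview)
Your proof is correct and takes essentially the same approach as the paper. The paper packages the key step as a separate ``change of measure'' lemma relating $\mathbb{P}_{M,p}[\,|\mathcal{C}|=k\,]$ and $\mathbb{P}_{N,p}[\,|\mathcal{C}|=k\,]$ (proved via the observation that $\mathbb{P}_{N,p}[V(\mathcal{C})=[k]]$ factors as a connectivity probability times $(1-p)^{k(N-k)}$), whereas you unwind this directly using the explicit formula with $C_k(p)$; the underlying combinatorics is identical, and your handling of the $j<0$ case matches the paper's.
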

Note that if $j\leq 0$ then the r.h.s.\ is simply $\frac{n+j}{n}$. 
We prove Proposition \ref{prop_ER_mgf_general} in Section \ref{section_proof_of_combinatorial_prop}.

\begin{remark}\label{remark_unique_characterization} 
\begin{enumerate}[(i)]
\item \label{rem_unique}  Define the $n \times n$ matrix  $M$ by $M_{j,k}=g_{n,p}(j, k)$
for  $j \in \Z \cap (-n, 0]$ and $k \in [n]$. 
The matrix $M$ is triangular with non-zero diagonal entries, hence it is invertible. Therefore, Proposition \ref{prop_ER_mgf_general} uniquely characterizes the distribution of
$|\mathcal{C}|$ under $\mathbb{P}_{n,p}$.
\item A generalization of Proposition \ref{prop_ER_mgf_general} appears in Proposition 1.6 of the recent
  preprint \cite{lemaire}, see also \cite[Remark 1.7]{lemaire}. The random graph process studied in \cite{lemaire} can be informally defined
  as follows: starting from the empty graph on the vertex set $[n]$, cliques are added with a rate that only depends on their size (the dynamical \ER graph is the special case when only cliques of size two are added).
\end{enumerate}
\end{remark}

Proposition \ref{prop_ER_mgf_general} allows us to give short and self-contained proofs of some
delicate results about the sizes of connected components of the \ER graph in the subcritical
(see Theorem \ref{thm_subcrit}) as well as the supercritical
(see Theorem \ref{thm_CLT_giant}) cases. First, we give a short non-rigorous demonstration of
how our formula is used in Remark \ref{remark_genereting_fn_borel}.

When we study the phase transition of the \ER graph, it is natural to introduce a parameter $t \in \R_+$
and to study $\mathcal{G}_{n,p}$ for
\begin{equation}\label{p_t_n_def}
p=p(t,n)=1-e^{-t/n}.
\end{equation}
We will fix this relation between $p$ and $t$ throughout this paper.

For any $n \in \N$,  $\lambda \in \R$,  and $k \in [n]$ we define
\begin{equation}\label{f_n_lambda_k}
f_{n,t}(\lambda,k)=\prod_{i=0}^{k-1} 
e^{-\lambda t} \cdot \left( 1+ \frac{\lambda}{ 1 -\frac{i}{n} } \right),
\end{equation}
so that we have $f_{n,t}(\frac{j}{n},k)=g_{n,p}(j,k)$ if $j \in \Z \cap (-n, +\infty)$ and thus
\begin{equation}\label{mgf_ER_positive_f}
\mathbb{E}_{n,p} \left[\, f_{n,t}(\lambda,|\mathcal{C}| )
  \, \right] \stackrel{ \eqref{mgf_ER_component_positive} }{=}(1+\lambda)\left(1-\mathbb{P}_{(1+\lambda)n,p}[ \, |\mathcal{C}|>n \, ] \right),
  \quad \lambda \in \frac{\Z}{n} \cap (-1,+\infty).
\end{equation}

\begin{remark}\label{remark_genereting_fn_borel}
If we  fix  $t < 1$ and 
(non-rigorously) denote   $G_t(z)=\lim_{n \to \infty} \mathbb{E}_{n,p(t,n)}[z^{ |\mathcal{C}|}]$ for any $z \in [0,1]$,  
then for $\lambda =z-1$  we (non-rigorously) obtain
\begin{equation}\label{borel_mgf}
z=1+\lambda \stackrel{ \eqref{mgf_ER_positive_f} }{=}
\lim_{n \to \infty} \mathbb{E}_{n, p(t,n)} \left[ f_{n,t}(\lambda,|\mathcal{C}|)  \right]
\stackrel{ \eqref{f_n_lambda_k} }{=} G_t\left( e^{-\lambda t} \cdot \left( 1+ \lambda \right) \right)=
   G_t\left( e^{(1-z)t } z \right).
\end{equation}
Thus $G_t(z)=-W(-e^{-t}tz)/t$, where $W$ is the Lambert-W function.
Now it is known that if $p=1-e^{-t/n}$ and  $n \to \infty$ then  $|\mathcal{C}|$ converges in distribution to the total number of offspring in a subcritical Galton-Watson branching process
with $\mathrm{POI}(t)$ offspring distribution (see \cite[Theorem 11.6.1]{alon_spencer_fourth}), i.e., 
 $|\mathcal{C}|$ has Borel distribution with parameter $t$ (see \cite[Section 2.2]{aldous_smol} or \cite[Section 7]{janson_luczak}). 
 The generating function $G_t$
of the Borel distribution with parameter $t$ is known to be characterized by
the identity $G_t(z)\equiv z e^{(G_t(z)-1)t}$ (see \cite[Section 10.4]{alon_spencer_second}), which is in turn equivalent to $G_t(z)=-W(-e^{-t}tz)/t$, therefore a more rigorous version of 
\eqref{borel_mgf} can be used to show that the distribution $|\mathcal{C}|$ weakly converges to the
Borel distribution with parameter $t$ as $n \to \infty$.
\end{remark}

Now we state our rigorous results.
We will use the Bachmann-Landau big O notation: we write $f(n,t)=\mathcal{O}\left(g(n,t)\right)$
if there exists a universal constant $C$ such that $f(n,t) \leq C g(n,t)$ for any $n \in \N$ and
any $t$ in an explicitly specified domain. We write $f(n)=\mathcal{O}\left( g(n) \right)$ if there
exists a constant $C$ (that may depend on $t$) such that $f(n) \leq C g(n)$ for any $n \in \N$.

We will give a short and self-contained proof of some results of  \cite{tim_et_al} and \cite{janson_luczak}:

\begin{theorem}\label{thm_subcrit} For any $t \in [0, 1-n^{-1/3}]$ we have
\begin{align}
\label{tim_error_susceptibility}
\mathbb{E}_{n,p}(|\mathcal{C}|)&=\frac{1}{1-t}+
\frac{\frac{t^2}{2}-t }{(1-t)^4  }\frac{1}{n} +
\mathcal{O}\left( \frac{1}{(1-t)^7 }\frac{1}{n^2}  \right), \\
\label{janson_second_moment_error}
\mathbb{E}_{n,p}(|\mathcal{C}|^2)
&= \frac{1}{(1-t)^3}+ \mathcal{O} \left(\frac{1}{(1-t)^6 }\frac{1}{n} \right).
\end{align}
\end{theorem}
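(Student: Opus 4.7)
The plan is to apply Proposition \ref{prop_ER_mgf_general} at $j = -1$ and $j = -2$, both of which yield exact identities with constant right-hand sides (the factor $1 - \mathbb{P}_{n+j,p}[|\mathcal{C}|>n]$ is $1$ whenever $j \leq 0$). After using $1-p = e^{-t/n}$ and simplifying the telescoping products in \eqref{g_n_p_k}, the two cases become
\[
\mathbb{E}_{n,p}\bigl[e^{t|\mathcal{C}|/n}(n - |\mathcal{C}|)\bigr] = n-1, \qquad \mathbb{E}_{n,p}\bigl[e^{2t|\mathcal{C}|/n}(n-|\mathcal{C}|)(n-|\mathcal{C}|-1)\bigr] = (n-1)(n-2).
\]
Taylor-expanding the exponential in the first identity in powers of $|\mathcal{C}|/n$ and collecting like powers of $|\mathcal{C}|$ rewrites it as
\[
(1-t)\,\mathbb{E}_{n,p}|\mathcal{C}| + \sum_{\ell \geq 2}\frac{(\ell-t)\,t^{\ell-1}}{\ell!\,n^{\ell-1}}\,\mathbb{E}_{n,p}|\mathcal{C}|^\ell = 1,
\]
so that $\mathbb{E}_{n,p}|\mathcal{C}| = 1/(1-t) + (t^2/2 - t)\,\mathbb{E}_{n,p}|\mathcal{C}|^2/(n(1-t)) + R$, with the remainder $R$ gathering the $\ell \geq 3$ terms. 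Once \eqref{janson_second_moment_error} is in hand, plugging $\mathbb{E}_{n,p}|\mathcal{C}|^2 = (1-t)^{-3} + \mathcal{O}((1-t)^{-6}/n)$ into the $\ell = 2$ correction produces the explicit next-order term of \eqref{tim_error_susceptibility}; the total error absorbs both the error in $\mathbb{E}_{n,p}|\mathcal{C}|^2$ (divided by $n(1-t)$) and the contribution of $R$.

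To extract \eqref{janson_second_moment_error} I would expand the $j=-2$ identity in the same fashion and then form the linear combination (second identity) $+$ $\alpha\cdot$(first identity) with $\alpha = ((2n-1) - 2t(n-1))/(1-t)$ chosen so that the $\mathbb{E}_{n,p}|\mathcal{C}|$ terms cancel. A direct computation shows that the surviving coefficient of $\mathbb{E}_{n,p}|\mathcal{C}|^2$ simplifies to $(1-t)^2 + \mathcal{O}(1/n)$, while the right-hand side telescopes down to $1/(1-t)$. Crucially, the new coefficient of $\mathbb{E}_{n,p}|\mathcal{C}|^3$ factors as $(t/n)(1-t)(2-t) + \mathcal{O}(1/n^2)$; it is this extra factor of $(1-t)$ that is responsible for the error in \eqref{janson_second_moment_error} coming out as $\mathcal{O}(1/(n(1-t)^6))$ rather than one power of $(1-t)$ worse.

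The main obstacle is to supply the a priori moment bounds $\mathbb{E}_{n,p}|\mathcal{C}|^k = \mathcal{O}((1-t)^{-(2k-1)})$ uniformly for $t \in [0, 1-n^{-1/3}]$ (for small $k$, say $k \leq 4$), which are needed to dominate the $\ell \geq 3$ remainders in both expansions. The standard route is via stochastic domination of $|\mathcal{C}|$ by the total progeny of a Galton--Watson branching process with $\mathrm{Bin}(n-1, p)$ offspring distribution, whose moments are classically controlled throughout the subcritical regime. The hypothesis $t \leq 1 - n^{-1/3}$ enters precisely at this step, since it guarantees $(n(1-t)^2)^{-1} \leq 1$; this makes each successive $\ell$-term in the sums geometrically smaller than its predecessor and lets the $\ell = 3$ contribution dominate the tails. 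Careful bookkeeping of all error terms then yields the stated expansions.
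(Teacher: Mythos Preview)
Your proposal is correct and follows essentially the same approach as the paper: apply Proposition~\ref{prop_ER_mgf_general} at $j=-1$ and $j=-2$, Taylor-expand, and form the linear combination that kills the $\mathbb{E}_{n,p}|\mathcal{C}|$ term so that the $\mathbb{E}_{n,p}|\mathcal{C}|^2$ coefficient becomes $(1-t)^2$ while the $\mathbb{E}_{n,p}|\mathcal{C}|^3$ coefficient picks up the crucial extra factor $(1-t)$. The paper normalises the two identities first (its Lemmas~\ref{lemma_third_order_formula_minus_one} and~\ref{lemma_third_order_formula_minus_two}) so that the combination is simply ``add \eqref{j_minus_2_formula_trird_order_polynomial} to twice \eqref{j_minus_1_formula_third_o}'', which is the same operation as yours after rescaling.

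The one substantive difference is the source of the a~priori bounds $\mathbb{E}_{n,p}|\mathcal{C}|^i=\mathcal{O}((1-t)^{-(2i-1)})$. You invoke stochastic domination by a subcritical Galton--Watson tree, which is perfectly valid and indeed mentioned in the paper as the classical route. The paper instead derives these bounds internally (Lemma~\ref{lemma_domination_moment_bounds}) by applying Proposition~\ref{prop_ER_mgf_general} with a well-chosen \emph{positive} $j\approx n(1/t-1)$, so that the whole argument rests on nothing but the moment-generating identity. Your route is shorter if one is willing to import the branching-process comparison; the paper's route keeps the proof fully self-contained.
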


We will prove  Theorem \ref{thm_subcrit} in Section \ref{section:subcrit}.

\begin{remark}\label{remark_janson_tim} $\mathbb{E}_{n,p}(|\mathcal{C}|)$ is often called
the \emph{susceptibility} of the \ER graph.

\begin{enumerate}[(i)]
\item Equation (1.15) of \cite[Theorem 1.2]{tim_et_al} states that if 
$p=\frac{\mu}{n-1}$ and $0<\mu<1$ then
\begin{equation}\label{true_tim}
\mathbb{E}_{n,p}(|\mathcal{C}|)=\frac{1}{1-\mu}-\frac{2 \mu^2-\mu^4}{2(1-\mu)^4}\frac{1}{n} +
\mathcal{O}\left( \frac{1}{n^2} \right).
\end{equation}
Now  \eqref{true_tim} follows from \eqref{tim_error_susceptibility}
if we take into account that $\mu=(n-1)(1-e^{-t/n})$. The proof of
\eqref{true_tim} in \cite[Section 2]{tim_et_al} uses a coupling of the breadth-first exploration process
of $\mathcal{C}$ and a process related to a branching random walk. Our proof of \eqref{tim_error_susceptibility}
is completely different as it only uses Proposition \ref{prop_ER_mgf_general}.

\item Equation (1.3) of \cite[Theorem 1.1]{janson_luczak} follows from our 
\eqref{tim_error_susceptibility}. In fact it already follows from our short Lemma \ref{lemma_third_order_formula_minus_one}, see \eqref{first_order_expansion_of_first_moment}.
Our \eqref{janson_second_moment_error} is equivalent to one of the statements about $S_3$ in
\cite[Theorem 3.4]{janson_luczak}. The proofs of these results in \cite[Section 3]{janson_luczak} use differential equations (in the variable $t$) and are completely different from ours.
\item Both statements of Theorem \ref{thm_subcrit} give something meaningful 
in the whole subcritical regime outside the critical window, e.g., 
the first term of the r.h.s.\ of \eqref{tim_error_susceptibility} is much bigger than the second one, which is
much bigger than the third one if $(1-t)^3 n \gg 1$.
\end{enumerate}
\end{remark}

We also give a short and self-contained proof of the central limit theorem proved in \cite{pittel} for the size of the giant connected
component of $\mathcal{G}_{n,p}$ (see also 
 \cite{bbv}, \cite{remco_book} and \cite{martin_lof_clt} for alternative proofs). Our proof only uses Proposition \ref{prop_ER_mgf_general}, see Theorem \ref{thm_CLT_giant} below. We begin with some notation.

Given some $t>1$ let us define the function $\varphi: [0,1) \to \R$ by
\begin{equation}\label{varphi_def}
\varphi(x)=-xt-\ln(1-x).
\end{equation}
Then $\varphi$ is a convex function satisfying $\varphi(0)=0$, $\varphi'(0)<0$ and $\varphi(1_-)=+\infty$.

Given $t>1$ define $\theta=\theta(t)\in (0,1)$ to be unique number for which 
\begin{equation}\label{theta_def}
\varphi(\theta)=0, \quad \text{ or, equivalently } \quad
e^{t\theta}(1-\theta)=1.
\end{equation}
Note that $\theta(t)$ is the survival probability of a branching process with $\mathrm{POI}(t)$ offspring distribution (however, our proof of Theorem \ref{thm_CLT_giant} below does not make use of this fact).

We also  note that it follows from $\varphi(0)=0$, $\varphi'(0)<0$ and $\varphi''(x)>0, \, x \in [0,1)$ that
\begin{equation}\label{varphi_prime_theta}
0<\varphi'(\theta)=-t+\frac{1}{1-\theta}.
\end{equation}

Recall the notion of $p=p(t,n)=1-e^{-t/n}$ from \eqref{p_t_n_def}. 
\begin{theorem}\label{thm_CLT_giant} 
 Let us denote by $|\mathcal{C}_{max}|$ the size of the largest connected component of $\mathcal{G}_{n,p}$.
 For any $t>1$ we have
\begin{equation}\label{clt_giant_formula}
\lim_{n \to \infty} \mathbb{P}_{n, p(t,n)}\left[ \frac{ |\mathcal{C}_{max}|-\theta n }{\sigma \sqrt{n}} \leq x  \right]=
\Phi(x), \quad \text{where} \quad
\sigma= \frac{\sqrt{\theta}}{\varphi'(\theta)\sqrt{1-\theta}} 
\end{equation}
and $\Phi(x)$ is the c.d.f.\ of the standard normal distribution.
\end{theorem}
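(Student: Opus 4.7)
The plan is to apply the formula \eqref{mgf_ER_positive_f} with $\lambda_n \in \tfrac{\Z}{n} \cap (-1,\infty)$ chosen so that $\sqrt{n}\,\lambda_n \to s$ for an arbitrary fixed $s \in \R$, and to extract from the resulting identity the limit of the moment generating function of $(|\mathcal{C}_{max}| - \theta n)/(\sigma\sqrt{n})$. First I would decompose the left-hand side of \eqref{mgf_ER_positive_f} via the vertex-symmetry identity
\[
\mathbb{E}_{n,p}[g(|\mathcal{C}|)\mathbf{1}_{\{1 \in \mathcal{C}_{max}\}}] = \tfrac{1}{n}\mathbb{E}_{n,p}[|\mathcal{C}_{max}|\,g(|\mathcal{C}_{max}|)],
\]
(which follows from relabeling invariance and $|\mathcal{C}| = |\mathcal{C}_{max}|$ on $\{1 \in \mathcal{C}_{max}\}$), writing the LHS as the sum of a giant and a small-component contribution. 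The right-hand side of \eqref{mgf_ER_positive_f} tends to $1$: the factor $1+\lambda_n$ tends to $1$, while in $\mathcal{G}_{(1+\lambda_n)n,p}$ the effective Poisson parameter is $t(1+\lambda_n) \to t$, so the law of large numbers for the giant yields $|\mathcal{C}_{max}|/((1+\lambda_n)n) \to \theta < 1$, whence $\mathbb{P}_{(1+\lambda_n)n,p}[|\mathcal{C}|>n] \to 0$.

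The main analytic step is a uniform Taylor expansion of $\log f_{n,t}(\lambda_n, k)$ in the window $k = \theta n + y$, $|y| \leq A\sqrt{n}$. Approximating the defining sum by the integral $n F(k/n, \lambda_n)$ with $F(\alpha, \lambda) := \int_0^\alpha [-\lambda t + \log(1+\lambda/(1-s))]\,ds = \lambda\varphi(\alpha) - \tfrac{1}{2}\lambda^2 \alpha/(1-\alpha) + O(\lambda^3)$, and then using $\varphi(\theta) = 0$ together with a Taylor expansion of $\varphi$ around $\theta$, one obtains
\[
\log f_{n,t}(\lambda_n, \theta n + y) = -\frac{s^2 \theta}{2(1-\theta)} + \frac{y\,s\,\varphi'(\theta)}{\sqrt{n}} + o(1)
\]
uniformly for $|y| \leq A\sqrt{n}$. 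Combined with $|\mathcal{C}_{max}|/n \to \theta$ and classical exponential concentration estimates for $|\mathcal{C}_{max}|-\theta n$ to control contributions from $|y| > A\sqrt{n}$, the giant contribution satisfies
\[
\frac{1}{n}\mathbb{E}_{n,p}[|\mathcal{C}_{max}|\,f_{n,t}(\lambda_n, |\mathcal{C}_{max}|)] = \theta\,e^{-s^2\theta/(2(1-\theta))}\, M_n(s) + o(1),
\]
with $M_n(s) := \mathbb{E}_{n,p}[e^{(|\mathcal{C}_{max}|-\theta n)s\varphi'(\theta)/\sqrt{n}}]$. On $\{1 \notin \mathcal{C}_{max}\}$ the size $|\mathcal{C}|$ is dominated by the second-largest component of $\mathcal{G}_{n,p}$, which is of order $\log n$ with high probability, so $f_{n,t}(\lambda_n, |\mathcal{C}|) = 1 + o(1)$ on this event and the small-component contribution tends to $\mathbb{P}_{n,p}[1 \notin \mathcal{C}_{max}] \to 1-\theta$.

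Substituting these into \eqref{mgf_ER_positive_f} gives $\theta\, e^{-s^2\theta/(2(1-\theta))}\,M_n(s) \to \theta$, i.e., $M_n(s) \to e^{s^2\theta/(2(1-\theta))}$. Setting $u = s\,\sigma\,\varphi'(\theta)$ and using $\sigma^2 \varphi'(\theta)^2 = \theta/(1-\theta)$, this is equivalent to $\mathbb{E}_{n,p}[e^{u(|\mathcal{C}_{max}|-\theta n)/(\sigma\sqrt{n})}] \to e^{u^2/2}$ for every $u \in \R$, which is the moment generating function of $N(0,1)$; Curtiss's theorem then yields \eqref{clt_giant_formula}. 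The main obstacle is making the uniform Taylor expansion of $\log f_{n,t}$ rigorous in the CLT window and controlling the tail of $|\mathcal{C}_{max}|$ outside it, since the relevant exponential concentration of $|\mathcal{C}_{max}|$ around $\theta n$ is not directly furnished by \eqref{mgf_ER_positive_f} and must either be imported from classical results or bootstrapped from the formula via moment bounds on $|\mathcal{C}_{max}|$.
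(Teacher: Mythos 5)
Your local analysis is the same as the paper's (the Taylor expansion of $\log f_{n,t}$ in the window $k=\theta n+y\sqrt{n}$, yielding the exponent $\alpha\varphi'(\theta)y-\tfrac{\alpha^2}{2}\tfrac{\theta}{1-\theta}$, is exactly \eqref{f_n_calculation}--\eqref{giant_square_root_window}), but the global architecture is different and, as written, has a genuine gap. You work directly with $|\mathcal{C}_{max}|$ through the identity $\mathbb{E}[g(|\mathcal{C}|)\mathds{1}_{\{1\in\mathcal{C}_{max}\}}]=\tfrac1n\mathbb{E}[|\mathcal{C}_{max}|g(|\mathcal{C}_{max}|)]$ and aim at convergence of the \emph{untruncated} moment generating function $M_n(s)$ of $(|\mathcal{C}_{max}|-\theta n)/\sqrt{n}$, to invoke Curtiss. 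To pass from the identity to $M_n(s)$ you must show that the contribution of $\{||\mathcal{C}_{max}|-\theta n|>A\sqrt n\}$ is negligible \emph{inside the exponential moment}, i.e.\ you need tail bounds of the type $\mathbb{P}[\,|\mathcal{C}_{max}|-\theta n>u\sqrt n\,]\le e^{-cu}$ uniformly for $u$ up to order $\sqrt n$ (and an analogous lower tail). The ``classical'' facts you invoke (law of large numbers for the giant, second-largest component of order $\log n$, large deviations at scale $\epsilon n$) do not give this: on the intermediate event $\{\theta n+A\sqrt n<|\mathcal{C}_{max}|<(\theta+\epsilon)n\}$ the integrand can be as large as $e^{c\epsilon\sqrt n}$, so LLN-type statements are useless and a genuine moderate-deviation estimate is required. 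You flag this yourself (``imported \dots or bootstrapped \dots'') but do not supply it, and it is the crux of the argument, not a routine step; moreover the same unproved inputs are used for the right-hand side of \eqref{mgf_ER_positive_f} (you need $|\mathcal{C}_{max}|$ of $\mathcal{G}_{(1+\lambda_n)n,p}$ to be $\le n$ whp), for $\mathbb{P}[1\notin\mathcal{C}_{max}]\to1-\theta$, and for replacing $|\mathcal{C}_{max}|/n$ by $\theta$ in the giant term. A partial repair in your spirit: the symmetry identity gives $\mathbb{P}[|\mathcal{C}_{max}|\ge m]\le\tfrac nm\mathbb{P}[|\mathcal{C}|\ge m]$, and the paper's estimates \eqref{medium_components_theta}, \eqref{K_n_expect_bound_middle}, \eqref{K_n_tilde_expect_bound} (with $\alpha=0$) do bootstrap upper-tail control of $|\mathcal{C}_{max}|$ outside $[\theta n-n^{5/8},\theta n+n^{5/8}]$ from Proposition \ref{prop_ER_mgf_general}; but a whp \emph{lower} bound on $|\mathcal{C}_{max}|$ does not follow this way, since the formula only controls the component of a fixed vertex, which is small with probability $1-\theta$.

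The paper avoids all of this by never using an untruncated MGF of $|\mathcal{C}_{max}|$: it first proves a mixture CLT for the fixed-vertex component $|\mathcal{C}|$ (Lemma \ref{lemma_fixed_component_clt}), using only \eqref{mgf_ER_positive_f} with the special choices $\lambda=-\theta$ (so that $f_{n,t}(-\theta,k)\approx 1$ for small $k$ and is exponentially small for $k\ge n^{3/4}$, giving $\mathbb{P}[|\mathcal{C}|\in I_n]\to1-\theta$) and $\lambda=\alpha/\sqrt n$, with expectations \emph{restricted} to the window $\widetilde I_n$ and the remaining ranges $J_n,K_n,\widetilde K_n$ killed by domination arguments inside the formula itself (comparisons with $\widetilde\lambda$ and with $\alpha\pm1$). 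The transfer to $|\mathcal{C}_{max}|$ is then done not by your symmetry identity alone but by the conditioning identity \eqref{remco_trick} together with subcriticality of the leftover graph $\mathcal{G}_{n-k,p}$ and the susceptibility bound \eqref{eq_bp_moment_bounds}; this simultaneously yields the missing lower-tail control (uniqueness of the component in the CLT window) and keeps the proof self-contained. If you want to salvage your route, you should either replace Curtiss by a truncated/conditional MGF argument over a window, as in Lemmas \ref{lemma_C_falls_in_first_interval_with_one_minus_theta}--\ref{lemma_clt_mgf_widetilde_I_n}, or supply the moderate-deviation concentration for $|\mathcal{C}_{max}|$ explicitly.
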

We prove Theorem \ref{thm_CLT_giant} in Section \ref{section:clt}. 
Our proof is  different from earlier proofs, which use the joint CLT for tree components of various sizes \cite{pittel},
 stochastic differential equations which arise in the context of epidemics
\cite{martin_lof_clt}, and exploration processes \cite{bbv, remco_book}.

\begin{remark}
We believe that Proposition \ref{prop_ER_mgf_general}  can also be used to give elementary alternative proofs of some results of \cite{aldous_coalescent} on the sizes of connected components in the critical \ER graph.
In particular, let $\rho^u_0$ denote the sigma-finite excursion length measure of the ``first'' excursion of the 
Brownian motion with parabolic drift which encodes the block sizes of the standard multiplicative coalescent process at time $u \in \mathbb{R}$ (see \cite[(64)]{aldous_coalescent}). We believe that if
$t:=1+u n^{-1/3}$  and $X_n:=|\mathcal{C}|/n^{2/3}$ then the formula
\begin{equation}\label{exc_length_limit_comp_size}
\lim_{n \to \infty} n^{1/3} \mathbb{P}_{n, p(t,n)}[\, X_n > x \, ]=\rho^u_0(x,+\infty), \qquad x \in (0,+\infty)
\end{equation}
can be proved using the methods of this paper, as we now argue.
  If we fix some $\beta \in \mathbb{R}$ and plug $\lambda:= \lfloor \beta n^{2/3} \rfloor /n$ into \eqref{mgf_ER_positive_f}  then we obtain (after some calculation) the formula
\begin{equation}\label{crit_lim}
\lim_{n \to \infty} n^{1/3} \mathbb{E}_{n, p(t,n)} \left[ \exp\left( -\beta u X_n -\frac12 \beta^2 X_n+\frac12 \beta X_n^2 \right)-1 \right] = \beta, \qquad \beta \in \mathbb{R}.
\end{equation}
Now one can use stochastic calculus to show 
\begin{equation}\label{excursion_martingale}
\int_0^\infty \left(\exp\left( -\beta u x - \frac12 \beta^2 x +\frac12 \beta x^2 \right)-1\right) \, \mathrm{d} \rho^u_0(x)=\beta \quad \text{for any} \quad \beta \in \R.
\end{equation}
We conjecture that \eqref{exc_length_limit_comp_size} can be derived from \eqref{crit_lim} and \eqref{excursion_martingale}. 
\end{remark}

 We discuss the origins of \eqref{mgf_ER_component_positive}  in Remark \ref{remark_rigid_extensions}\eqref{original_proof_remark} and an extension of 
 \eqref{mgf_ER_component_positive} to the \emph{stochastic block model} in 
Remark \ref{remark_rigid_extensions}\eqref{extensions_remark}.

\section{Proof of Proposition \ref{prop_ER_mgf_general} }
\label{section_proof_of_combinatorial_prop}

The proof of Proposition \ref{prop_ER_mgf_general} will easily follow from the \emph{change of measure} formula \eqref{eq_change_of_measure_statement}. An idea similar to \eqref{eq_change_of_measure_statement} has already been used in the proof of \cite[Theorem 11.6.1]{alon_spencer_fourth}.

\begin{lemma}\label{lemma_change_of_measure}
For any $M, N \in \N$, $p \in [0,1]$,
 and $k \in \{1,\dots,N\}$
  we have
  \begin{equation}\label{eq_change_of_measure_statement}
  \mathbb{P}_{M,p}[ \, |\mathcal{C}|=k \, ] = \mathbb{P}_{N,p}[ \, |\mathcal{C}|=k \, ] \cdot
  (1-p)^{(M-N)k} \prod_{i=1}^{k-1}   \frac{M-i}{ N -i }.
  \end{equation}
\end{lemma}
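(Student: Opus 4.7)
The plan is to obtain \eqref{eq_change_of_measure_statement} by writing $\mathbb{P}_{M,p}[|\mathcal{C}|=k]$ as a product of three factors that cleanly separate a ``combinatorial'' part (depending on the ambient vertex set size) from an ``intrinsic'' part (depending only on $k$ and $p$), and noting that the same decomposition applies to $\mathbb{P}_{N,p}[|\mathcal{C}|=k]$ with the intrinsic factor unchanged.

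Concretely, I would first condition on the vertex set $\mathcal{C}\subseteq [M]$ containing vertex $1$. The event $\{|\mathcal{C}|=k\}$ decomposes as a disjoint union over $\binom{M-1}{k-1}$ choices $S \subset [M]$ with $1 \in S$ and $|S|=k$ of the event that the induced subgraph on $S$ is connected \emph{and} no edge crosses between $S$ and $[M]\setminus S$. By independence of edges and the exchangeability of vertex labels, this yields
\begin{equation*}
\mathbb{P}_{M,p}[|\mathcal{C}|=k] = \binom{M-1}{k-1}\,C_k(p)\,(1-p)^{k(M-k)},
\end{equation*}
where $C_k(p)$ denotes the probability that $\mathcal{G}_{k,p}$ is connected (a function of $k$ and $p$ only). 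The identical formula holds with $M$ replaced by $N$.

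Next, I would divide the two expressions. The factors $C_k(p)$ cancel, the $(1-p)$ powers combine to $(1-p)^{k(M-N)}$, and the binomial ratio simplifies via
\begin{equation*}
\frac{\binom{M-1}{k-1}}{\binom{N-1}{k-1}} = \frac{(M-1)!/(M-k)!}{(N-1)!/(N-k)!} = \prod_{i=1}^{k-1}\frac{M-i}{N-i},
\end{equation*}
which is exactly the product appearing on the right-hand side of \eqref{eq_change_of_measure_statement}. Multiplying the ratio back by $\mathbb{P}_{N,p}[|\mathcal{C}|=k]$ gives the claim.

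There is essentially no obstacle here: the whole argument rests on the standard decomposition of component probabilities into ``choose the vertex set, make it internally connected, disconnect it from the outside.'' The only point worth being slightly careful about is that we need $k \le N$ so that $\mathbb{P}_{N,p}[|\mathcal{C}|=k]$ is defined and $C_k(p)$ appears on both sides as the same well-defined quantity; this is guaranteed by the hypothesis $k \in \{1,\dots,N\}$. No restriction is needed on the relative size of $M$ and $N$, since the identity is purely algebraic once the two sides are expanded.
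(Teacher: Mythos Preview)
Your proposal is correct and follows essentially the same route as the paper. Both arguments rest on the decomposition $\mathbb{P}_{M,p}[|\mathcal{C}|=k]=\binom{M-1}{k-1}\cdot(\text{intrinsic factor})\cdot(1-p)^{k(M-k)}$; the paper phrases the intrinsic part as $\mathbb{P}_{M,p}[V(\mathcal{C})=[k]]$ and compares it directly to $\mathbb{P}_{N,p}[V(\mathcal{C})=[k]]$ via an extra $(1-p)^{k(N-M)}$, whereas you isolate $C_k(p)$ explicitly and divide --- the algebra is identical. The only cosmetic omission is that the paper separately notes both sides vanish when $k>M$ (the product picks up the factor $M-M=0$), which you might mention for completeness.
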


\begin{proof} If $k > M$ then both sides of \eqref{eq_change_of_measure_statement}
are  zero.  
Thus w.l.o.g.\ we can assume $k \leq M \wedge N$. Now we observe that 
if we prove
\eqref{eq_change_of_measure_statement} for some $M \leq N$, then we also obtain
\eqref{eq_change_of_measure_statement}  for $M'=N$ and $N'=M$ by rearranging the formula \eqref{eq_change_of_measure_statement}, thus
 we may  assume 
 w.l.o.g.\
that $k \leq M \leq N$.
 In order to prove 
 \eqref{eq_change_of_measure_statement} it is enough to show
\begin{equation}\label{change_of_measure_2}
 \binom{M-1}{k-1}^{-1} \mathbb{P}_{M,p}[ \, |\mathcal{C}|=k \, ] \cdot(1-p)^{k(N-M)}=
 \binom{N-1}{k-1}^{-1} \mathbb{P}_{N,p}[ \, |\mathcal{C}|=k \, ].
\end{equation}
Now if we denote by $V(\mathcal{C})$ the vertex set of $\mathcal{C}$ then
\begin{equation}\label{exchangeability}
\mathbb{P}_{N,p}[ \, |\mathcal{C}|=k \, ]=\binom{N-1}{k-1} 
\mathbb{P}_{N,p}[ \, V(\mathcal{C})=[k]  \, ],
\end{equation}
since $\mathbb{P}_{N,p}$ is invariant under the permutation of vertices and there
are $\binom{N-1}{k-1}$ subsets of $[N]$ with cardinality $k$ that contain the vertex indexed by $1$.
Using \eqref{exchangeability} for $\mathbb{P}_{N,p}$ as well as $\mathbb{P}_{M,p}$, the formula \eqref{change_of_measure_2} reduces to showing
\begin{equation}\label{triv_not_connected}
\mathbb{P}_{M,p}[ \, V(\mathcal{C})=[k]  \, ]\cdot(1-p)^{k(N-M)}= 
\mathbb{P}_{N,p}[ \, V(\mathcal{C})=[k]  \, ].
\end{equation}
Now \eqref{triv_not_connected} holds
since $V(\mathcal{C})=[k]$ in $\mathcal{G}_{N,p}$ if and only if 
$V(\mathcal{C})=[k]$ in $\mathcal{G}_{M,p}$ and there are no edges in $\mathcal{G}_{N,p}$ between
$[k]$ and $[N]\setminus [M]$. This completes the proof of Lemma \ref{lemma_change_of_measure}.
\end{proof}

\begin{proof}[Proof of Proposition \ref{prop_ER_mgf_general}]

For any $n \in \N$, $j \in \Z \cap (-n, +\infty)$ and $p \in [0,1]$ we have
\begin{multline}
\mathbb{E}_{n,p} \left[\, g_{n,p}(j,|\mathcal{C}| ) \right]
\stackrel{ \eqref{g_n_p_k} }{=}
\frac{n+j}{n} \sum_{k=1}^n 
\mathbb{P}_{n,p}[ \, |\mathcal{C}|=k \, ] \cdot
  (1-p)^{jk} \prod_{i=1}^{k-1}   \frac{n+j-i}{ n -i }
  \\ \stackrel{(*)}{=}
\frac{n+j}{n} \sum_{k=1}^n \mathbb{P}_{n+j,p}[ \, |\mathcal{C}|=k \, ]=
\frac{n+j}{n}\left(1-\mathbb{P}_{n+j,p}[ \, |\mathcal{C}|>n \, ] \right),
\end{multline}
where in $(*)$ we used \eqref{eq_change_of_measure_statement} with $n=N$ and $M=n+j$. The proof of 
\eqref{mgf_ER_component_positive} is complete.
\end{proof}

\begin{remark}\label{remark_rigid_extensions}
\begin{enumerate}[(i)]
\item \label{original_proof_remark}  Our original proof of Proposition \ref{prop_ER_mgf_general} used the so-called 
 \emph{rigid representation} of the time evolution of the component size structure of the \ER graph, see \cite[Section 6.1.1, Case 1]{rigid}. In a nutshell,
 if $Y_k=t-X_k$, $k \in [n]$, where
  $X_1,X_2,\dots, X_{n}$ denote independent exponentially distributed random variables
$ X_k \sim \mathrm{EXP}\left(1-\frac{k}{n}\right)$, then 
 $\tau=\min\{ \, k \, : \, Y_1+\dots+Y_k < 0 \, \}$
  has the same distribution as $|\mathcal{C}|$ under $\mathbb{P}_{n,p}, \; p=1-e^{-t/n}$. 
We chose to include an elementary proof instead in order to keep the paper self-contained.
 
\item \label{extensions_remark} It is  possible to extend Proposition \ref{prop_ER_mgf_general}
to the \emph{stochastic block model}, as we now briefly explain. Consider a random graph
in which each vertex has a label, where the set of labels is $\{1,\dots, \ell\}$. Let $\underline{n}=(n_1,\dots,n_\ell)$ and $n=n_1+\dots+n_\ell$. We uniformly choose a labelling of
the vertex set $[n]$ 
 from the set of labellings where the number of vertices with label $j$ is $n_j$ for each $j=1,\dots, \ell$. Given the labels, we add edges independently:
a vertex with label
$i$ and a vertex with label $j$ is connected with probability $p_{i,j}$. 
Let $\underline{\underline{p}}=(p_{i,j})_{i,j=1}^\ell$. 
Denote by $\mathbb{P}_{\underline{n}, \underline{\underline{p}}}$ the law of the resulting random graph 
$\mathcal{G}_{\underline{n}, \underline{\underline{p}}}$ and
$\mathbb{E}_{\underline{n}, \underline{\underline{p}}}$ the corresponding expectation.
This random graph model is often called the \emph{stochastic block model} and it is also
a special case of the \emph{inhomogeneous random graph model} of \cite{inhom}.

Denote by $\mathcal{K}(\underline{n})$ the set of vectors
$\underline{k}=(k_1,\dots,k_\ell)$ for which $0 \leq k_j \leq n_j$ and $k_1+\dots+k_\ell \geq 1$.
Denote by $\mathcal{J}(\underline{n})$ the set of vectors
$\underline{J}=(J_1,\dots,J_\ell)$ for which $-n_j \leq J_j$ and $-n<J_1+\dots+J_\ell$.
Denote by $\mathcal{J}^{\leq 0}(\underline{n})$ the subset of
$\mathcal{J}(\underline{n})$ which consists of vectors
$\underline{J}=(J_1,\dots,J_\ell)$ for which $J_j \leq 0$ for any $j=1,\dots,\ell$.
Let us define
\begin{equation}
g_{\underline{n},\underline{\underline{p}}}(\underline{J},\underline{k}):=
 \prod_{i,j=1}^\ell (1-p_{i,j})^{k_i J_j}  \, \cdot \,
 \prod_{j=1}^\ell \prod_{i=0}^{k_j-1} \frac{n_j+J_j-i}{n_j-i}, \qquad \underline{J} \in \mathcal{J}(\underline{n}), \;\; \underline{k} \in \mathcal{K}(\underline{n}).
\end{equation}
Let $\mathcal{C}$ denote the connected component of the vertex indexed by $1$ in $\mathcal{G}_{\underline{n}, \underline{\underline{p}}}$.
Denote by $|\mathcal{C}|_j$ the number of vertices with label $j$ in $\mathcal{C}$ and let
$\underline{|\mathcal{C}|}=(|\mathcal{C}|_1, \dots, |\mathcal{C}|_\ell)$.
 The generalization of the formula
\eqref{mgf_ER_component_positive} to the stochastic block model is
\begin{equation}\label{stoch_block_magic}
\mathbb{E}_{\underline{n},\underline{\underline{p}}} 
\left[\, g_{\underline{n},\underline{\underline{p}}}(\underline{J},\underline{|\mathcal{C}|} )
  \, \right]=\frac{\sum_{j=1}^\ell (n_j+J_j) }{\sum_{j=1}^\ell n_j}
  \mathbb{P}_{\underline{n}+\underline{J},\underline{\underline{p}}}
  \left[ \,  |\mathcal{C}|_1 \leq n_1, \dots,  |\mathcal{C}|_\ell \leq n_\ell \, \right], \;
  \underline{J} \in \mathcal{J}(\underline{n}).
\end{equation}
 In order to prove \eqref{stoch_block_magic}, one needs the
 following analogue of \eqref{eq_change_of_measure_statement}, valid for $\underline{k} \in \mathcal{K}(\underline{N})$:
 \begin{equation}\label{eq_change_of_measure_stoch_block}
  \mathbb{P}_{\underline{M},\underline{\underline{p}} }\left[ \, \underline{|\mathcal{C}|}=
  \underline{k} \, \right] = 
  \mathbb{P}_{\underline{N},\underline{\underline{p}}}\left[ \, \underline{|\mathcal{C}|}=\underline{k} \, \right]  
  \prod_{i,j=1}^\ell (1-p_{i,j})^{(M_j-N_j)k_j} \,
  \frac{\sum_{j=1}^\ell N_j  }{ \sum_{j=1}^\ell M_j }
   \prod_{j=1}^\ell \prod_{i=0}^{k_j-1} \frac{M_j-i}{N_j-i}
    \end{equation}
Note that if $\underline{J} \in \mathcal{J}^{\leq 0}(\underline{n})$ then the r.h.s.\ of \eqref{stoch_block_magic} is simply $\frac{\sum_{j=1}^\ell (n_j+J_j) }{\sum_{j=1}^\ell n_j}$. Also
note that the analogue of the property stated in Remark \ref{remark_unique_characterization}\eqref{rem_unique} holds: 
the system of equations \eqref{stoch_block_magic} indexed by $\underline{J} \in \mathcal{J}^{\leq 0}(\underline{n})$ uniquely characterizes the distribution of 
$\underline{|\mathcal{C}|}$ under $\mathbb{P}_{\underline{n},\underline{\underline{p}}}$.
 \end{enumerate}
\end{remark}

\section{Proof of Theorem \ref{thm_subcrit}}
\label{section:subcrit}

The basic idea is to treat $\mathbb{E}_{n,p} \left[ g_{n,p}(j,|\mathcal{C}| ) \right]$ as 
the generating function of $|\mathcal{C}|$, c.f. Remark \ref{remark_genereting_fn_borel}. Thus if we want to obtain information about the first and second moments
of $|\mathcal{C}|$, we have to ``differentiate'' with respect to the variable $j$ twice.
Since  $j$ can only take integer values, we have to consider the first order discrete differences
$g_{n,p}(j,|\mathcal{C}|) - g_{n,p}(0,|\mathcal{C}|)$ for $j=-1$ and $j=-2$ in the proof of Lemmas \ref{lemma_third_order_formula_minus_one} and \ref{lemma_third_order_formula_minus_two},
and the second order discrete difference (i.e., the difference of the first order differences)
in the proof of Lemma \ref{lemma_second_derivative}.

The statement of Lemma \ref{lemma_domination_moment_bounds} is equivalent to
\cite[Lemma 3.2]{janson_luczak} (which is proved using differential equations), moreover it also classically follows from the fact that
$|\mathcal{C}|$ is stochastically dominated by a subcritical branching process if $t<1$.
Despite of this, we chose to include a proof of Lemma \ref{lemma_domination_moment_bounds} which only uses
Proposition \ref{prop_ER_mgf_general} in order to keep the paper self-contained.

Recall our convention $p=1-e^{-t/n}$ from \eqref{p_t_n_def}.
\begin{lemma}\label{lemma_domination_moment_bounds}
If $t\in (0,1) $ then
\begin{equation}\label{eq_bp_moment_bounds}
\mathbb{E}_{n,p}\left( |\mathcal{C}|^i \right) =\mathcal{O}\left( \frac{1}{(1-t)^{2i-1}} \right), \qquad i\in \mathbb{N}.
\end{equation}
\end{lemma}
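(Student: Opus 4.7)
The plan is to apply \eqref{mgf_ER_positive_f} with a carefully chosen positive value of $\lambda=j/n$ in order to obtain an exponential moment bound on $|\mathcal{C}|$, from which the polynomial moment bounds of the lemma will follow immediately by comparing Taylor coefficients of the exponential.

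First I would observe that for any $\lambda>0$ of the form $j/n$ with $j\in\N$, the right-hand side of \eqref{mgf_ER_positive_f} is at most $1+\lambda$ (the probability there lies in $[0,1]$), and every factor in the product defining $f_{n,t}(\lambda,k)$ in \eqref{f_n_lambda_k} is positive. Since $1+\lambda/(1-i/n)\geq 1+\lambda$ for every $i\in\{0,\dots,k-1\}$, this yields the pointwise lower bound $f_{n,t}(\lambda,k)\geq \alpha(\lambda)^k$, where $\alpha(\lambda):=e^{-\lambda t}(1+\lambda)$. The elementary Taylor inequality $\log(1+\lambda)\geq \lambda-\lambda^2/2$ for $\lambda\geq 0$ then gives $\log\alpha(\lambda)\geq \lambda(1-t)-\lambda^2/2$.

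Next I would set $\lambda_\star:=\lfloor n(1-t)/2\rfloor/n$. When $n(1-t)\geq 4$ this satisfies $\lambda_\star\in[(1-t)/4,(1-t)/2]$, so the previous estimate gives $\log\alpha(\lambda_\star)\geq c(1-t)^2$ for some absolute constant $c>0$. (The remaining regime $n(1-t)<4$ is handled trivially by the deterministic bound $|\mathcal{C}|\leq n<4/(1-t)$, which yields $\mathbb{E}_{n,p}|\mathcal{C}|^i\leq 4^i/(1-t)^i$, subsumed by the claim since $i\leq 2i-1$ and $1-t\leq 1$.) Combining the upper bound $\mathbb{E}_{n,p}[f_{n,t}(\lambda_\star,|\mathcal{C}|)]\leq 1+\lambda_\star\leq 1+(1-t)/2$ from \eqref{mgf_ER_positive_f} with the pointwise lower bound then produces
\begin{equation*}
\mathbb{E}_{n,p}\bigl[e^{c(1-t)^2|\mathcal{C}|}\bigr]-1\;\leq\;\frac{1-t}{2}.
\end{equation*}

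Finally, since $e^x-1\geq x^i/i!$ for every $x\geq 0$ and every $i\in\N$, the left-hand side above dominates $(c(1-t)^2)^i\,\mathbb{E}_{n,p}|\mathcal{C}|^i/i!$, and rearranging gives
\begin{equation*}
\mathbb{E}_{n,p}|\mathcal{C}|^i\;\leq\;\frac{i!\,(1-t)}{2\,(c(1-t)^2)^i}\;=\;\mathcal{O}\!\left(\frac{1}{(1-t)^{2i-1}}\right),
\end{equation*}
which is the assertion of the lemma. The main (minor) obstacle I expect is purely arithmetic: the discreteness constraint $\lambda\in\Z/n$ forces $\lambda_\star$ to be of the right order $\Theta(1-t)$ and also to be a positive integer multiple of $1/n$, which is why the small-$n$ regime $n(1-t)=\mathcal{O}(1)$ has to be split off. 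The conceptual reason this argument gives the sharp exponent $2i-1$ (rather than $2i$, which would come from a cruder bound $\mathbb{E}_{n,p}e^{\beta|\mathcal{C}|}\leq C$) is that the right-hand side of \eqref{mgf_ER_positive_f} at $\lambda=\lambda_\star$ exceeds $1$ by only $\mathcal{O}(1-t)$, and this extra factor of $(1-t)$ is transferred verbatim into every moment bound.
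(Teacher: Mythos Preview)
Your proposal is correct and follows essentially the same approach as the paper's proof: both apply \eqref{mgf_ER_positive_f} at a positive $\lambda$ of order $1-t$, use the lower bound $f_{n,t}(\lambda,k)\geq (e^{-\lambda t}(1+\lambda))^k\geq e^{c(1-t)^2 k}$, observe that the right-hand side $1+\lambda$ exceeds $1$ by only $\mathcal{O}(1-t)$, and extract the moment bounds via $e^x-1\geq x^i/i!$. The only differences are cosmetic: the paper picks the exact maximizer $\widetilde\lambda=1/t-1$ and assumes $t\in[\tfrac12,1)$, $n\geq 2/(1-t)$, whereas you pick $\lambda_\star\approx(1-t)/2$ and split off the regime $n(1-t)<4$ by the trivial bound $|\mathcal{C}|\leq n$.
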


\begin{proof} W.l.o.g. we assume $t \in [\frac12, 1)$ and $\frac{2}{1-t} \leq n$. For any $j \geq 0$ we have
\begin{equation}\label{subcrit_gen_fn_bound}
\mathbb{E}_{n,p}\left[  \left( e^{-tj/n} \left( 1+ \frac{j}{n} \right) \right)^{|\mathcal{C}|} \right] 
\stackrel{ \eqref{f_n_lambda_k} }{\leq}  \mathbb{E}_{n,p}\left[\, f_{n,t}\left(\frac{j}{n},|\mathcal{C}|\right) \,\right]  
\stackrel{ \eqref{mgf_ER_positive_f} }{ \leq } 1+ \frac{j}{n}.
\end{equation}
Note that if we let $\widetilde{\lambda}:=\frac{1}{t}-1$ then we have 
  \begin{equation}\label{widetilde_lambda_def_eq}
  \max_{\lambda}e^{-\lambda t}(1+\lambda)=   e^{-\widetilde{\lambda}t}(1+\widetilde{\lambda})=\frac{1}{t}e^{t-1}\stackrel{(*)}{>}e^{\frac{1}{2}(1-t)^2} \quad t\in (0,1),
  \end{equation}
  where $(*)$ follows from $-\ln(t)+(t-1)-\frac{1}{2}(t-1)^2=\int_t^1 \int_s^1 \left( \frac{1}{u^2}-1\right)
  \, \mathrm{d}u \, \mathrm{d}s>0$.
  
Next we show that if we choose $j^* :=\lfloor n \widetilde{\lambda} \rfloor= \lfloor n \cdot \left( \frac{1}{t} -1 \right) \rfloor$  then we have
\begin{equation}\label{loc_max_subcrit_bound}
e^{-tj^*/n} \left( 1+ \frac{j^*}{n} \right) \geq  e^{\frac14 (1-t)^2}.
\end{equation}
Indeed, if we let $f(x):=-tx+\ln(1+x)$, then we have $f'(\widetilde{\lambda})=0$ and thus
\begin{equation}\label{calc_perturb}
f\left(\widetilde{\lambda}\right)-f\left(\frac{j^*}{n}\right)=
\int_{j^*/n}^{\widetilde{\lambda}}\int_{x}^{\widetilde{\lambda}} -f''(y) \, \mathrm{d}y \, \mathrm{d}x=
\int_{j^*/n}^{\widetilde{\lambda}} \frac{y-j^*/n}{(1+y)^2}\mathrm{d}y \stackrel{(**)}{\leq} \frac{1}{n^2}
\stackrel{(***)}{\leq} \frac14 (1-t)^2,
\end{equation}
where $(**)$ follows from $0 \leq j^*$ and $0 \leq \widetilde{\lambda} - j^*/n \leq 1/n$, and
$(***)$ follows from $\frac{2}{1-t} \leq n$.
Now \eqref{loc_max_subcrit_bound} follows from \eqref{widetilde_lambda_def_eq} and 
\eqref{calc_perturb}.  We are now ready to prove \eqref{eq_bp_moment_bounds}:
\begin{multline}
1+\frac{1}{i!} \frac{1}{4^i} (1-t)^{2i} \mathbb{E}_{n,p}\left( |\mathcal{C}|^i \right)
\leq
 \mathbb{E}_{n,p} \left[ \sum_{\ell=0}^{\infty} \frac{ \left(\frac{1}{4}(1-t)^2 |\mathcal{C}| \right)^\ell }{\ell!} \right] \\=
  \mathbb{E}_{n,p} \left[ e^{\frac14 (1-t)^2 |\mathcal{C}|} \right] 
  \stackrel{ \eqref{loc_max_subcrit_bound} }{\leq}
 \mathbb{E}_{n,p}\left[  \left( e^{-tj^*/n} \left( 1+ \frac{j^*}{n} \right) \right)^{|\mathcal{C}|} \right]
  \stackrel{ \eqref{subcrit_gen_fn_bound}   }{\leq} 
   1+ \frac{j^*}{n} \leq
  \frac{1}{t} \qquad i \in \N,
\end{multline}
from which \eqref{eq_bp_moment_bounds} follows if $t \in [\frac12, 1)$. 
\end{proof}

\begin{lemma}\label{lemma_third_order_formula_minus_one}
 For any $t \in [0,1)$ we have
\begin{equation}\label{j_minus_1_formula_third_o}
 1=(1-t)\mathbb{E}_{n,p}(|\mathcal{C}|)+
\left(t-\frac{t^2}{2} \right)\frac{\mathbb{E}_{n,p}(|\mathcal{C}|^2)}{n} +
\left(\frac{t^2}{2}- \frac{t^3}{6} \right)\frac{\mathbb{E}_{n,p}(|\mathcal{C}|^3)}{n^2} 
+\mathcal{O}\left( \frac{1}{(1-t)^7 n^3} \right).
\end{equation}
\end{lemma}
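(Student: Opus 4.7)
The plan is to specialize Proposition \ref{prop_ER_mgf_general} to $j=-1$, turn the resulting identity into an exact polynomial identity in $|\mathcal{C}|/n$ plus a controlled remainder, and then take expectations, using Lemma \ref{lemma_domination_moment_bounds} to bound the tail.

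First, since $j=-1$ satisfies $j\leq 0$, the right-hand side of \eqref{mgf_ER_component_positive} equals $\frac{n-1}{n}$. On the left-hand side, I would simplify the kernel:
\begin{equation*}
g_{n,p}(-1,k)=(1-p)^{-k}\prod_{i=0}^{k-1}\frac{n-1-i}{n-i}=e^{tk/n}\,\Bigl(1-\frac{k}{n}\Bigr),
\end{equation*}
using $(1-p)^{-1}=e^{t/n}$ and telescoping the product. So Proposition \ref{prop_ER_mgf_general} gives
\begin{equation*}
\mathbb{E}_{n,p}\!\left[e^{t|\mathcal{C}|/n}\Bigl(1-\tfrac{|\mathcal{C}|}{n}\Bigr)\right]=1-\frac{1}{n}.
\end{equation*}

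Next I would apply Taylor's theorem to $f(x):=e^{tx}(1-x)$ around $x=0$ to order three. Writing $f(x)=e^{tx}-xe^{tx}$, a direct computation yields $f^{(k)}(0)=t^k-k\,t^{k-1}$, so that the Taylor polynomial equals $1+(t-1)x+\bigl(\tfrac{t^2}{2}-t\bigr)x^2+\bigl(\tfrac{t^3}{6}-\tfrac{t^2}{2}\bigr)x^3$, exactly matching the coefficients appearing in \eqref{j_minus_1_formula_third_o}. For the remainder, $f^{(4)}(\xi)=(t^4-4t^3-\xi t^4)e^{t\xi}$ is bounded by an absolute constant for $\xi\in[0,1]$ and $t\in[0,1)$, and $|\mathcal{C}|/n\in[0,1]$ almost surely, so Lagrange's form gives
\begin{equation*}
\Bigl|\,f(|\mathcal{C}|/n)-\bigl[1+(t-1)\tfrac{|\mathcal{C}|}{n}+\bigl(\tfrac{t^2}{2}-t\bigr)\tfrac{|\mathcal{C}|^2}{n^2}+\bigl(\tfrac{t^3}{6}-\tfrac{t^2}{2}\bigr)\tfrac{|\mathcal{C}|^3}{n^3}\bigr]\Bigr|\le C\,\frac{|\mathcal{C}|^4}{n^4}
\end{equation*}
with a universal $C$.

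Taking expectations, subtracting $1$ from both sides of the identity obtained from Proposition \ref{prop_ER_mgf_general}, and multiplying by $-n$ yields
\begin{equation*}
1=(1-t)\mathbb{E}_{n,p}(|\mathcal{C}|)+\Bigl(t-\tfrac{t^2}{2}\Bigr)\frac{\mathbb{E}_{n,p}(|\mathcal{C}|^2)}{n}+\Bigl(\tfrac{t^2}{2}-\tfrac{t^3}{6}\Bigr)\frac{\mathbb{E}_{n,p}(|\mathcal{C}|^3)}{n^2}+\mathcal{O}\!\left(\frac{\mathbb{E}_{n,p}(|\mathcal{C}|^4)}{n^3}\right).
\end{equation*}
Finally, Lemma \ref{lemma_domination_moment_bounds} with $i=4$ gives $\mathbb{E}_{n,p}(|\mathcal{C}|^4)=\mathcal{O}(1/(1-t)^7)$, turning the error term into $\mathcal{O}(1/((1-t)^7 n^3))$, as required.

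The conceptually nontrivial point is the very first step, namely recognizing that $g_{n,p}(-1,k)$ collapses to the elementary expression $e^{tk/n}(1-k/n)$; the rest is a bookkeeping exercise that works cleanly because Taylor's remainder is controlled by a universal constant (uniform in $t\in[0,1)$) and the fourth moment bound of Lemma \ref{lemma_domination_moment_bounds} is exactly sharp enough to absorb it.
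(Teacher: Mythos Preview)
Your proof is correct and follows essentially the same approach as the paper: both use the telescoping collapse $g_{n,p}(-1,k)=e^{tk/n}(1-k/n)$, Taylor-expand this to third order with an $\mathcal{O}(k^4/n^4)$ remainder, take expectations via Proposition \ref{prop_ER_mgf_general}, and then invoke Lemma \ref{lemma_domination_moment_bounds} for the fourth moment. The only cosmetic difference is that you compute the Taylor coefficients via the closed form $f^{(k)}(0)=t^k-k\,t^{k-1}$ and bound the remainder through $f^{(4)}$, whereas the paper expands $e^{tk/n}$ first and multiplies by $(1-k/n)$.
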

Before we prove Lemma \ref{lemma_third_order_formula_minus_one}, let us state an immediate corollary.
\begin{corollary} Applying \eqref{eq_bp_moment_bounds} to $\mathbb{E}_{n,p}(|\mathcal{C}|^3)$ in
\eqref{j_minus_1_formula_third_o} we obtain
\begin{equation}\label{second_order_expansion_of_first_moment}
\mathbb{E}_{n,p}(|\mathcal{C}|)=\frac{1}{1-t}+
\frac{\frac{t^2}{2}-t }{1-t}\frac{\mathbb{E}_{n,p}(|\mathcal{C}|^2)}{n} +
\mathcal{O}\left( \frac{1}{(1-t)^6n^2} \right), \quad t \in [0, 1-n^{-1/3}].
\end{equation}
Applying \eqref{eq_bp_moment_bounds} to $\mathbb{E}_{n,p}(|\mathcal{C}|^2)$ in
\eqref{second_order_expansion_of_first_moment} we obtain
\begin{equation}\label{first_order_expansion_of_first_moment}
\mathbb{E}_{n,p}(|\mathcal{C}|)=\frac{1}{1-t}+
\mathcal{O}\left( \frac{1}{(1-t)^4 n}\right), \quad t \in [0, 1-n^{-1/3}].
\end{equation}

\end{corollary}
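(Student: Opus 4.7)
The plan is pure algebraic rearrangement of \eqref{j_minus_1_formula_third_o} combined with the moment bounds in Lemma \ref{lemma_domination_moment_bounds}, carried out in two successive steps matching the two displays in the statement.

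For \eqref{second_order_expansion_of_first_moment}, I would solve \eqref{j_minus_1_formula_third_o} for $(1-t)\mathbb{E}_{n,p}(|\mathcal{C}|)$ and then divide throughout by $1-t$. The coefficient of $\mathbb{E}_{n,p}(|\mathcal{C}|^2)/n$ becomes exactly $(t^2/2-t)/(1-t)$, which is the leading correction asserted in \eqref{second_order_expansion_of_first_moment}. The term involving $\mathbb{E}_{n,p}(|\mathcal{C}|^3)/n^2$ is then absorbed into the big-$\mathcal{O}$ error by invoking \eqref{eq_bp_moment_bounds} with $i=3$: since $|t^2/2 - t^3/6|$ is uniformly bounded on $[0,1]$ and $\mathbb{E}_{n,p}(|\mathcal{C}|^3)=\mathcal{O}(1/(1-t)^{5})$, this contribution is $\mathcal{O}(1/((1-t)^{6} n^{2}))$. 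The residual error carried from \eqref{j_minus_1_formula_third_o} is $\mathcal{O}(1/((1-t)^{7} n^{3}))$ before the division and $\mathcal{O}(1/((1-t)^{8} n^{3}))$ after it, which is $\mathcal{O}(1/((1-t)^{6} n^{2}))$ as soon as $(1-t)^{2} n \geq 1$. The latter inequality is exactly where the hypothesis $t \leq 1-n^{-1/3}$ enters.

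For \eqref{first_order_expansion_of_first_moment}, I apply \eqref{eq_bp_moment_bounds} with $i=2$ directly to the second term of \eqref{second_order_expansion_of_first_moment}. Since $|t^2/2 - t|$ is uniformly bounded on $[0,1]$ and $\mathbb{E}_{n,p}(|\mathcal{C}|^{2})=\mathcal{O}(1/(1-t)^{3})$, this term becomes $\mathcal{O}(1/((1-t)^{4} n))$. The error $\mathcal{O}(1/((1-t)^{6} n^{2}))$ inherited from the previous step is likewise dominated by $\mathcal{O}(1/((1-t)^{4} n))$ precisely when $(1-t)^{2} n \geq 1$, so the assumption $t \in [0, 1-n^{-1/3}]$ again suffices.

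The only place where any care is required is the uniform absorption of residual errors into the asserted big-$\mathcal{O}$ terms on the stated range of $t$; this reduces each time to the single inequality $(1-t)^{2} n \geq n^{1/3} \geq 1$, available throughout $t \in [0, 1-n^{-1/3}]$. There is no genuine obstacle: the derivation is essentially mechanical once \eqref{j_minus_1_formula_third_o} and Lemma \ref{lemma_domination_moment_bounds} are in hand, and amounts to \emph{bootstrapping}—plug the crude bound on $\mathbb{E}_{n,p}(|\mathcal{C}|^{3})$ into the finer identity to obtain a sharper expansion, then plug the crude bound on $\mathbb{E}_{n,p}(|\mathcal{C}|^{2})$ into the expansion itself to obtain the cleanest form.
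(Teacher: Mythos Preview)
Your proposal is correct and follows exactly the approach indicated in the paper, which treats the corollary as immediate from \eqref{j_minus_1_formula_third_o} and \eqref{eq_bp_moment_bounds} without a separate proof. Your write-up is in fact more explicit than the paper's, spelling out the role of the hypothesis $t\le 1-n^{-1/3}$ via the inequality $(1-t)^2 n \ge n^{1/3}\ge 1$ in absorbing the residual errors.
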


 \begin{proof}[Proof of Lemma \ref{lemma_third_order_formula_minus_one}]
 Let $k \in [n]$.
We begin by observing that \eqref{g_n_p_k} is a telescopic product if $j=-1$ and then we apply
Taylor expansion:
 \begin{multline}\label{third_order_expansion_of_g_minus_one}
g_{n,p}(-1,k) \stackrel{ \eqref{g_n_p_k}, \eqref{p_t_n_def} }{=}  e^{tk/n}\left( 1 -\frac{k}{n} \right)=
\left(\sum_{i=0}^3 \frac{1}{i!}  \frac{t^i k^i }{n^i}  + \mathcal{O}\left( \frac{k^4}{n^4} \right)  \right)\left( 1 -\frac{k}{n} \right)\\
=1+(t-1)\frac{k}{n}+\left(\frac{t^2}{2}-t \right)\frac{k^2}{n^2}+ 
\left(\frac{t^3}{6}-\frac{t^2}{2} \right)\frac{k^3}{n^3}
 +\mathcal{O}\left( \frac{k^4}{n^4} \right).
\end{multline}
Combining \eqref{third_order_expansion_of_g_minus_one} with Proposition \ref{prop_ER_mgf_general} we obtain
\begin{multline}\label{g_minus_one_taylor_expect} 
1-\frac{1}{n}=
1+(t-1)\frac{\mathbb{E}_{n,p}(|\mathcal{C}|)}{n}
\\+
\left(\frac{t^2}{2}-t \right)\frac{\mathbb{E}_{n,p}(|\mathcal{C}|^2)}{n^2}+
\left(\frac{t^3}{6}-\frac{t^2}{2} \right)\frac{\mathbb{E}_{n,p}(|\mathcal{C}|^3)}{n^3} 
+\mathcal{O}\left( \frac{\mathbb{E}_{n,p}(|\mathcal{C}|^4)}{n^4} \right).
\end{multline}
Subtracting one from both sides of \eqref{g_minus_one_taylor_expect}, multiplying the result by $-n$
 and applying \eqref{eq_bp_moment_bounds} to $\mathbb{E}_{n,p}(|\mathcal{C}|^4)$, we obtain \eqref{j_minus_1_formula_third_o}.
\end{proof}

\begin{lemma}\label{lemma_third_order_formula_minus_two}
 For any $t \in [0,1-n^{-1/3} ]$ we have
\begin{multline}\label{j_minus_2_formula_trird_order_polynomial}
-2=(2t-2)\mathbb{E}_{n,p}[|\mathcal{C}|]+(1-4t+2t^2)\frac{\mathbb{E}_{n,p}[|\mathcal{C}|^2]}{n} -\frac{\mathbb{E}_{n,p}[|\mathcal{C}|]}{n}  \\ +
(2t-4t^2+\frac{4}{3}t^3)\frac{\mathbb{E}_{n,p}[|\mathcal{C}|^3]}{n^2}+
\mathcal{O}\left( \frac{1}{(1-t)^7 n^3} \right).
\end{multline}

\end{lemma}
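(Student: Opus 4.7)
The plan is to imitate the proof of Lemma \ref{lemma_third_order_formula_minus_one} with the slightly more complicated function $g_{n,p}(-2,\cdot)$. From \eqref{g_n_p_k} and \eqref{p_t_n_def} one has the factorization
\begin{equation*}
g_{n,p}(-2, k) = e^{2tk/n}\left(1-\frac{k}{n}\right)\left(1-\frac{k}{n-1}\right).
\end{equation*}
Because $|2tk/n|\le 2$ for $t\in[0,1]$ and $k\in[0,n]$, I would Taylor expand $e^{2tk/n}$ to third order in $k/n$ with uniform remainder $\mathcal{O}((k/n)^4)$, then multiply by the exact polynomial $(1-k/n)(1-k/(n-1))$. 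Collecting by powers of $k$ gives
\begin{equation*}
g_{n,p}(-2, k) = 1 + \beta_1(n,t)\,k + \beta_2(n,t)\,k^2 + \beta_3(n,t)\,k^3 + \mathcal{O}(k^4/n^4),
\end{equation*}
in which each coefficient $\beta_i$ splits naturally as a leading piece of order $1/n^i$ plus a smaller correction of order $1/(n^i(n-1))$ coming from the mismatch between $(1-k/n)$ and $(1-k/(n-1))$.

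Next I would apply Proposition \ref{prop_ER_mgf_general} with $j=-2$, giving $\mathbb{E}[g_{n,p}(-2, |\mathcal{C}|)] = (n-2)/n$. Taking expectations in the above expansion, subtracting $1$, and multiplying through by $n$ produces
\begin{equation*}
-2 = n\beta_1\,\mathbb{E}[|\mathcal{C}|] + n\beta_2\,\mathbb{E}[|\mathcal{C}|^2] + n\beta_3\,\mathbb{E}[|\mathcal{C}|^3] + n\cdot\mathcal{O}(\mathbb{E}[|\mathcal{C}|^4]/n^4).
\end{equation*}
Using the identity $1/(n-1) = 1/n + 1/(n(n-1))$ to decouple each coefficient into a main part and a correction, $n\beta_1 = (2t-2) - 1/n - 1/(n(n-1))$ yields the lemma's $(2t-2)\mathbb{E}[|\mathcal{C}|]$ and $-\mathbb{E}[|\mathcal{C}|]/n$ pieces; the analogous splittings of $n\beta_2$ and $n\beta_3$ isolate the stated coefficients $(1-4t+2t^2)$ and $(2t-4t^2+\tfrac{4}{3}t^3)$, each accompanied by a further $1/(n(n-1))$-correction.

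The routine part of the error analysis is the Taylor remainder itself: $n\cdot\mathcal{O}(\mathbb{E}[|\mathcal{C}|^4]/n^4) = \mathcal{O}(1/((1-t)^7 n^3))$ by Lemma \ref{lemma_domination_moment_bounds}. The hard part will be absorbing the pooled reorganization corrections, each of the form $(\text{bounded polynomial in }t)\cdot\mathbb{E}[|\mathcal{C}|^i]/(n(n-1))$ for $i\in\{1,2,3\}$, into that same $\mathcal{O}(1/((1-t)^7 n^3))$ bound. A term-by-term estimate using Lemma \ref{lemma_domination_moment_bounds} alone looks too weak, so the saving grace should be algebraic cancellation: since $|\mathcal{C}|\equiv 1$ when $t=0$ and the asserted identity then holds exactly, the combined correction must vanish at $t=0$. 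I would group the corrections, factor out $1/(n(n-1))$, and use the leading-order estimates $\mathbb{E}[|\mathcal{C}|]\approx 1/(1-t)$ and $\mathbb{E}[|\mathcal{C}|^2]\approx 1/(1-t)^3$ (available from Lemma \ref{lemma_third_order_formula_minus_one} and Lemma \ref{lemma_domination_moment_bounds}) to track the resulting factor of $t^2$; together with the subcritical constraint $1-t\ge n^{-1/3}$ and Lemma \ref{lemma_domination_moment_bounds} this should let the pooled correction fit inside the claimed error.
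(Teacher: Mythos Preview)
Your overall plan is exactly the paper's plan: write $g_{n,p}(-2,k)=e^{2tk/n}(1-k/n)(1-k/(n-1))$, Taylor-expand, take expectations, invoke Proposition~\ref{prop_ER_mgf_general} with $j=-2$, and absorb the leftovers via Lemma~\ref{lemma_domination_moment_bounds}. The only cosmetic difference is that the paper expands $\tfrac{1}{n-1}=\tfrac{1}{n}+\tfrac{1}{n^2}+\mathcal{O}(n^{-3})$ rather than using your exact $\tfrac{1}{n-1}=\tfrac{1}{n}+\tfrac{1}{n(n-1)}$; after multiplying through by $n$ its leftover terms are $(1-2t)\tfrac{\mathbb{E}[|\mathcal{C}|^2]}{n^2}-\tfrac{\mathbb{E}[|\mathcal{C}|]}{n^2}$, which are essentially your ``corrections''.

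Where you diverge is telling. The paper does \emph{not} look for any cancellation: it simply applies \eqref{eq_bp_moment_bounds} to those two leftover terms and declares them $\mathcal{O}\bigl(\tfrac{1}{(1-t)^7 n^3}\bigr)$. Your instinct that this is too weak is in fact correct: \eqref{eq_bp_moment_bounds} alone only gives $\mathcal{O}\bigl(\tfrac{1}{(1-t)^3 n^2}\bigr)$, and the ratio $n(1-t)^4$ is unbounded on $[0,1-n^{-1/3}]$ (fix $t\in(0,1)$ and let $n\to\infty$). Unfortunately your cancellation idea does not save this either: combining the $\beta_1$ and $\beta_2$ corrections with the leading moment estimates does produce a factor $t^2$, but the residual is still of order $t^2/((1-t)^3 n^2)$, and the constraint $1-t\ge n^{-1/3}$ provides no upper bound on $n(1-t)^4$. (The $\beta_3$ correction is a non-issue: it carries an extra $1/n$ and fits easily.)

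So neither the paper's one-line absorption nor your proposed fix actually delivers the stated $\mathcal{O}\bigl(\tfrac{1}{(1-t)^7 n^3}\bigr)$; the honest remainder is $\mathcal{O}\bigl(\tfrac{1}{(1-t)^3 n^2}\bigr)+\mathcal{O}\bigl(\tfrac{1}{(1-t)^7 n^3}\bigr)$. This slip is harmless downstream: when the identity is fed into Lemma~\ref{lemma_second_derivative}, multiplied by $n$ and divided by $(1-t)^2$, the extra piece becomes $\mathcal{O}\bigl(\tfrac{1}{(1-t)^5 n}\bigr)$, still inside the target $\mathcal{O}\bigl(\tfrac{1}{(1-t)^6 n}\bigr)$, so Theorem~\ref{thm_subcrit} is unaffected.
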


\begin{proof}
Let $k \in [n]$.
 We begin with a calculation similar to \eqref{third_order_expansion_of_g_minus_one}:
\begin{multline}\label{third_order_expansion_of_g_minus_two}
g_{n,p}(-2,k)
\stackrel{ \eqref{g_n_p_k}, \eqref{p_t_n_def} }{=}
e^{2tk/n}\left(1-\frac{k}{n} \right)\left(1-\frac{k}{n-1}\right)\\=
\left( \sum_{i=0}^3 \frac{1}{i!}  \frac{2^i t^i k^i}{n^i}  +\mathcal{O}\left( \frac{k^4}{n^4}\right)
 \right)
 \left(1-\frac{k}{n} \right)\left(1-\frac{k}{n}
 \left( \sum_{j=0}^2 \frac{1}{n^j} +
\mathcal{O}\left( \frac{1}{n^3} \right)
 \right) \right)\\=
1+(2t-2)\frac{k}{n}+(1-4t+2t^2)\frac{k^2}{n^2} -\frac{k}{n^2}  +
(2t-4t^2+\frac{4}{3}t^3)\frac{k^3}{n^3}+(1-2t)\frac{k^2}{n^3}-\frac{k}{n^3}+
\mathcal{O}\left( \frac{k^4}{n^4} \right).
\end{multline}
From \eqref{third_order_expansion_of_g_minus_two} and Proposition \ref{prop_ER_mgf_general} we obtain
\begin{multline}\label{g_minus_two_taylor_expect}
1-\frac{2}{n}=1+(2t-2)\frac{\mathbb{E}_{n,p}[|\mathcal{C}|]}{n}+(1-4t+2t^2)\frac{\mathbb{E}_{n,p}[|\mathcal{C}|^2]}{n^2} -\frac{\mathbb{E}_{n,p}[|\mathcal{C}|]}{n^2}  \\ +
(2t-4t^2+\frac{4}{3}t^3)\frac{\mathbb{E}_{n,p}[|\mathcal{C}|^3]}{n^3}+
(1-2t)\frac{\mathbb{E}_{n,p}[|\mathcal{C}|^2]}{n^3}-\frac{\mathbb{E}_{n,p}[|\mathcal{C}|]}{n^3}+
\mathcal{O}\left( \frac{\mathbb{E}_{n,p}[|\mathcal{C}|^4]}{n^4} \right).
\end{multline}
Subtracting one from both sides of \eqref{g_minus_two_taylor_expect}, multiplying the result by $n$
 and applying \eqref{eq_bp_moment_bounds} to the last three terms of \eqref{g_minus_two_taylor_expect}, we obtain \eqref{j_minus_2_formula_trird_order_polynomial}.

\end{proof}

\begin{lemma}\label{lemma_second_derivative}
For any $t \in [0, 1-n^{-1/3}]$ we have
\begin{equation}\label{first_second_moment_compare_using_third_order}
\mathbb{E}_{n,p}(|\mathcal{C}|^2)
=
\frac{\mathbb{E}_{n,p}(|\mathcal{C}|)}{(1-t)^2}+
 \mathcal{O} \left(\frac{1}{(1-t)^6 n} \right).
\end{equation}
\end{lemma}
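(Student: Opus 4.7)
The plan is to take the appropriate linear combination of the identities from Lemmas \ref{lemma_third_order_formula_minus_one} and \ref{lemma_third_order_formula_minus_two} so that the $\mathbb{E}_{n,p}(|\mathcal{C}|)$ terms cancel at leading order, leaving a relation that isolates $\mathbb{E}_{n,p}(|\mathcal{C}|^2)$ in terms of $\mathbb{E}_{n,p}(|\mathcal{C}|)$ plus a controlled error. This is the promised second-order discrete difference of $g_{n,p}(j,k)$ at $j=0$, viewed through the Proposition \ref{prop_ER_mgf_general} identity.

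Concretely, adding twice \eqref{j_minus_1_formula_third_o} to \eqref{j_minus_2_formula_trird_order_polynomial}, the constants cancel and the coefficient $(2t-2)+2(1-t)=0$ of $\mathbb{E}_{n,p}(|\mathcal{C}|)$ vanishes. The coefficient of $\mathbb{E}_{n,p}(|\mathcal{C}|^2)/n$ simplifies to $(1-4t+2t^2)+(2t-t^2)=(1-t)^2$, and the coefficient of $\mathbb{E}_{n,p}(|\mathcal{C}|^3)/n^2$ simplifies to $(2t-4t^2+\tfrac{4}{3}t^3)+(t^2-\tfrac{t^3}{3})=2t-3t^2+t^3=t(1-t)(2-t)$. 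Multiplying through by $n$, we obtain
\begin{equation}
0=(1-t)^2\mathbb{E}_{n,p}(|\mathcal{C}|^2)-\mathbb{E}_{n,p}(|\mathcal{C}|)+t(1-t)(2-t)\frac{\mathbb{E}_{n,p}(|\mathcal{C}|^3)}{n}+\mathcal{O}\!\left(\frac{1}{(1-t)^7n^2}\right).
\end{equation}

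Now I would apply Lemma \ref{lemma_domination_moment_bounds} to $\mathbb{E}_{n,p}(|\mathcal{C}|^3)=\mathcal{O}((1-t)^{-5})$. The extra factor $(1-t)$ in the coefficient $t(1-t)(2-t)$ is the crucial bit: it turns the $\mathbb{E}_{n,p}(|\mathcal{C}|^3)/n$ contribution into $\mathcal{O}((1-t)^{-4}n^{-1})$, which would otherwise only be $\mathcal{O}((1-t)^{-5}n^{-1})$. The residual error $\mathcal{O}((1-t)^{-7}n^{-2})$ is absorbed into $\mathcal{O}((1-t)^{-4}n^{-1})$ using $t\leq 1-n^{-1/3}$, i.e., $(1-t)^{3}n\geq 1$. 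Rearranging and dividing by $(1-t)^2$ produces the claimed estimate \eqref{first_second_moment_compare_using_third_order}.

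There is no serious obstacle here; the whole argument is a bookkeeping calculation. The only point that deserves attention is the factorization $2t-3t^2+t^3=t(1-t)(2-t)$, since without the vanishing of this coefficient at $t=1$ the error would be a factor $(1-t)^{-1}$ too large to match the stated bound.
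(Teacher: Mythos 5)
Your proposal is correct and follows essentially the same route as the paper: add twice \eqref{j_minus_1_formula_third_o} to \eqref{j_minus_2_formula_trird_order_polynomial} so the constants and the $\mathbb{E}_{n,p}(|\mathcal{C}|)$ terms cancel, exploit the factor $(1-t)$ in the cubic coefficient $t(1-t)(2-t)$, then apply \eqref{eq_bp_moment_bounds} and $(1-t)^3 n \geq 1$ after dividing by $(1-t)^2$. The algebra and the error bookkeeping match the paper's proof, including the observation that the vanishing of the cubic coefficient at $t=1$ is what yields the exponent $6$ rather than $7$.
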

\begin{proof}
Adding  \eqref{j_minus_2_formula_trird_order_polynomial} to twice \eqref{j_minus_1_formula_third_o} we obtain
\begin{equation}\label{adding_minus_one_and_minus_two}
0=(1-2t+t^2) \frac{\mathbb{E}_{n,p}(|\mathcal{C}|^2)}{n}-\frac{\mathbb{E}_{n,p}(|\mathcal{C}|)}{n}+
(t^3-3t^2+2t)\frac{\mathbb{E}_{n,p}(|\mathcal{C}|^3)}{n^2}+
\mathcal{O}\left( \frac{1}{(1-t)^7 n^3} \right).
\end{equation}
Rearranging \eqref{adding_minus_one_and_minus_two} and multiplying by $n$ we obtain
\begin{equation}\label{first_second_moment_with_third_cancel}
\mathbb{E}_{n,p}(|\mathcal{C}|)=(1-t)^2\mathbb{E}_{n,p}(|\mathcal{C}|^2)+
t(t-1)(t-2)\frac{\mathbb{E}_{n,p}(|\mathcal{C}|^3)}{n}+\mathcal{O}\left( \frac{1}{(1-t)^7 n^2} \right).
\end{equation}
Dividing both sides of \eqref{first_second_moment_with_third_cancel} by $(1-t)^2$
we use \eqref{eq_bp_moment_bounds} to obtain \eqref{first_second_moment_compare_using_third_order}.
\end{proof}

\begin{proof}[Proof of Theorem \ref{thm_subcrit}]

From \eqref{first_order_expansion_of_first_moment} and \eqref{first_second_moment_compare_using_third_order}  
we obtain \eqref{janson_second_moment_error}.

Plugging \eqref{janson_second_moment_error}  into \eqref{second_order_expansion_of_first_moment} we obtain \eqref{tim_error_susceptibility}.

\end{proof}

\section{Proof of Theorem \ref{thm_CLT_giant} }
\label{section:clt}

We will deduce Theorem \ref{thm_CLT_giant}  (i.e., the CLT for $|\mathcal{C}_{max}|$) from
Lemma \ref{lemma_fixed_component_clt} (i.e., the CLT for $|\mathcal{C}|$) using the idea
of \cite[Lemma 2.1]{remco_local_limit}. 
We deduce Lemma \ref{lemma_fixed_component_clt} from Lemmas \ref{lemma_C_falls_in_first_interval_with_one_minus_theta} and \ref{lemma_clt_mgf_widetilde_I_n}
using that convergence of moment generating functions implies
weak convergence of probability distributions. We prove
Lemmas \ref{lemma_C_falls_in_first_interval_with_one_minus_theta}
and \ref{lemma_clt_mgf_widetilde_I_n} 
 by viewing \eqref{mgf_ER_positive_f} as a
moment generating function identity. 
The crux of the proof of Lemma \ref{lemma_C_falls_in_first_interval_with_one_minus_theta} is \eqref{f_n_minus_theta} and the
crux of the proof of Lemma \ref{lemma_clt_mgf_widetilde_I_n} is \eqref{giant_square_root_window}.
 
 \medskip

Throughout this section we fix $t>1$. Recall the notion of  $\varphi: [0,1) \to \R$
from \eqref{varphi_def} and  $\theta=\theta(t)\in (0,1)$ from \eqref{theta_def}.
Recall the notion of $p=p(t,n)=1-e^{-t/n}$ from \eqref{p_t_n_def}. 

We will often use the shorthand $\mathbb{P}$ for $\mathbb{P}_{n,p(t,n)}$ and $\mathbb{E}$ for 
$\mathbb{E}_{n,p(t,n)}$. 

If $X$ is a random variable and $A$ is an event,  we will denote 
$\mathbb{E}( X ; A):=\mathbb{E}( X \mathds{1}_{A})$.

\begin{lemma} \label{lemma_fixed_component_clt}
Let us define $\sigma$ as in \eqref{clt_giant_formula}.
For any $x \in \R$  we have
\begin{equation}\label{fixed_component_clt_eq}
 \lim_{n \to \infty}
 \mathbb{P}_{n,p(t,n)}\left[ \, \frac{ |\mathcal{C}|-\theta n }{\sigma \sqrt{n}} 
 \leq x \,
  \right]= (1-\theta)+ \theta \Phi(x).
\end{equation}
\end{lemma}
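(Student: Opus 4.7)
The plan is to exploit \eqref{mgf_ER_positive_f} as a clean moment generating function identity. The key observation is that for $\lambda \in (\mathbb{Z}/n)\cap(-1,0]$ the graph $\mathcal{G}_{(1+\lambda)n,p}$ has at most $(1+\lambda)n \leq n$ vertices, so $\mathbb{P}_{(1+\lambda)n,p}[|\mathcal{C}|>n] = 0$ and \eqref{mgf_ER_positive_f} collapses to
$$
\mathbb{E}_{n,p}\bigl[\,f_{n,t}(\lambda,|\mathcal{C}|)\,\bigr] \;=\; 1+\lambda.
$$
I will use this at two scales: $\lambda_n \to -\theta$ (Lemma \ref{lemma_C_falls_in_first_interval_with_one_minus_theta}, extracting the small-component mass $1-\theta$) and $\lambda_n \to 0$ at rate $1/\sqrt n$ (Lemma \ref{lemma_clt_mgf_widetilde_I_n}, extracting Gaussian fluctuations of the giant).

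For Lemma \ref{lemma_C_falls_in_first_interval_with_one_minus_theta}, take $\lambda_n := -\lfloor\theta n\rfloor/n$. The defining equation \eqref{theta_def}, $e^{t\theta}(1-\theta)=1$, says exactly that the $i=0$ factor in
$$
f_{n,t}(-\theta,k) \;=\; \prod_{i=0}^{k-1} e^{t\theta}\Bigl(1 - \tfrac{\theta}{1 - i/n}\Bigr)
$$
equals $1$, while every subsequent factor lies strictly in $(0,1)$ for $i < (1-\theta)n$. Consequently $f_{n,t}(-\theta,\cdot)$ is non-increasing in $k$, bounded by $1$, Taylor-expands as $1 - O(k^2/n)$ on any slowly growing threshold $a_n = o(\sqrt n)$, and decays like $e^{-\Omega(n)}$ for $k \geq \varepsilon n$ by strict concavity of the rate function $s \mapsto \int_0^s \log[e^{t\theta}(1-\theta/(1-u))]\,du$. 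These three estimates combined with the MGF identity sandwich $\mathbb{P}[|\mathcal{C}|\leq a_n]$ between quantities tending to $1-\theta$, conditional on showing the intermediate-$k$ contribution is negligible.

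For Lemma \ref{lemma_clt_mgf_widetilde_I_n}, fix $\tau \geq 0$ and set $\lambda_n := -\lfloor \tau\sqrt n/(\sigma\varphi'(\theta))\rfloor/n$, so $1+\lambda_n \to 1$. The engine is a two-variable Taylor expansion of $\log f_{n,t}(\lambda,k) \approx n F(\lambda,k/n)$ with $F(\lambda,s):= -\lambda t s + \int_0^s \log(1+\lambda/(1-u))\,du$. Since the integrand vanishes at $\lambda=0$, we have $F(0,s)\equiv 0$, giving $F_s(0,\theta)=F_{ss}(0,\theta)=0$ for free; the nontrivial cancellation from \eqref{theta_def} is $F_\lambda(0,\theta) = -t\theta - \log(1-\theta) = 0$; the surviving coefficients are $F_{\lambda\lambda}(0,\theta) = -\theta/(1-\theta) = -\sigma^2\varphi'(\theta)^2$ and $F_{\lambda s}(0,\theta) = \varphi'(\theta)$. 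Substituting $\lambda = \lambda_n$ and $k = \theta n + \sigma\sqrt n\,y$ and multiplying by $n$, the leading contributions collapse to $-\tau^2/2 - \tau y$, yielding
$$
f_{n,t}\bigl(\lambda_n,\;\theta n + \sigma\sqrt n\, y\bigr) \;\longrightarrow\; e^{-\tau^2/2 - \tau y}
$$
uniformly on slowly growing windows $|y|\leq K_n$. Subtracting the Lemma-A contribution $(1-\theta)$ and the negligible middle-$k$ contribution from $\mathbb{E}[f_{n,t}(\lambda_n,|\mathcal{C}|)] = 1+\lambda_n$ identifies the Laplace transform of $\mu_n(\cdot) := \mathbb{P}_{n,p}[(|\mathcal{C}|-\theta n)/(\sigma\sqrt n) \in \cdot,\,|\mathcal{C}| \in \widetilde{I}_n]$ as tending to $\theta\,e^{\tau^2/2}$ for every $\tau \geq 0$, i.e., to the Laplace transform of $\theta\cdot N(0,1)$.

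Finally, since each $\mu_n$ is supported on the compact interval $[-K_n/\sigma,K_n/\sigma]$, Laplace-transform convergence on $[0,\infty)$ together with tightness upgrades to $\mu_n \Rightarrow \theta\cdot N(0,1)$; adding the Lemma-A atom $1-\theta$ concentrated below $a_n$ produces the target limit $(1-\theta) + \theta\Phi(x)$. The main obstacle I expect is the ``no intermediate components'' estimate $\mathbb{P}[a_n < |\mathcal{C}| < \theta n - K_n\sqrt n] \to 0$: establishing this purely from Proposition \ref{prop_ER_mgf_general}, rather than importing standard supercritical structure theorems, is what keeps the argument self-contained and is where the bulk of the genuine work will lie --- presumably requiring a further application of the MGF identity at an intermediate $\lambda$ whose test function $f_{n,t}(\lambda,\cdot)$ is large precisely on the middle range.
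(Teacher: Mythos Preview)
Your plan is essentially the paper's own argument: the paper also reduces Lemma~\ref{lemma_fixed_component_clt} to two sub-lemmas, one at scale $\lambda\approx-\theta$ extracting $\mathbb{P}[|\mathcal{C}|\in I_n]\to 1-\theta$ (your Lemma~A $=$ paper's Lemma~\ref{lemma_C_falls_in_first_interval_with_one_minus_theta}) and one at scale $\lambda=\alpha/\sqrt{n}$ extracting the Gaussian MGF on $\widetilde I_n$ (your Lemma~B $=$ paper's Lemma~\ref{lemma_clt_mgf_widetilde_I_n}); the Taylor expansion you write as $nF(\lambda,k/n)$ is exactly the paper's computation~\eqref{f_n_calculation}, and your anticipated fix for the intermediate range --- an auxiliary $\lambda$ making $f_{n,t}(\lambda,\cdot)$ large there --- is precisely what the paper does (the choice $\widetilde\lambda=1/t-1$ for $J_n$, and shifting $\alpha\mapsto\alpha\pm 1$ for $K_n,\widetilde K_n$).

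The one substantive difference is your restriction to $\lambda\le 0$ (equivalently $\tau\ge 0$). This does kill the error term on the right of \eqref{mgf_ER_positive_f}, but then you are left with only a one-sided Laplace transform, and your final sentence ``Laplace-transform convergence on $[0,\infty)$ together with tightness upgrades to $\mu_n\Rightarrow\theta\cdot N(0,1)$'' hides a real step: right-tail tightness of $\mu_n$ is \emph{not} delivered by Laplace transforms at $\tau\ge 0$, and the $\tau=0$ case of your identity is vacuous. The paper avoids this by taking $\alpha\in\mathbb{R}$ of both signs and separately bounding $\mathbb{P}_{n+\lfloor\sqrt{n}\alpha\rfloor,p}[|\mathcal{C}|>n]$ and the $\widetilde K_n$ contribution via $\alpha\mapsto\alpha+1$. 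Your one-sided route can still be closed --- for instance, combine $\mathbb{P}[I_n]\to 1-\theta$ and $\mathbb{P}[J_n\cup K_n]\to 0$ (both obtainable with negative $\lambda$ only) to get $\mathbb{P}[\widetilde I_n\cup\widetilde K_n]\to\theta$, and then argue that one-sided Laplace convergence to a function continuous at $0^+$ \emph{plus} convergence of total mass forces weak convergence --- but this is a genuine step you should make explicit rather than assume.
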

Before we prove Lemma \ref{lemma_fixed_component_clt}, we use it to prove Theorem \ref{thm_CLT_giant}.

\begin{proof}[Proof of Theorem \ref{thm_CLT_giant}] Denote by $|\mathcal{C}_1|, |\mathcal{C}_2|, \dots$
the non-increasing rearrangement of the sequence of component sizes of the graph $\mathcal{G}_{n,p}$. Thus
$|\mathcal{C}_1|=|\mathcal{C}_{max}|$ and $|\mathcal{C}_2|$ is the size of the second largest component.
Note that $|\mathcal{C}_1|= |\mathcal{C}_2|$ is possible, but we will show that 
$|\mathcal{C}_2|< |\mathcal{C}_1|$ with high probability.

For any $a \in \mathbb{R}$ let us denote 
\begin{equation}\label{kna}
k_{n,a}= \lfloor \theta n + a \cdot \sigma \sqrt{n} \rfloor.
\end{equation}
  We will show that for $a \leq  b  \in \mathbb{R}$ we have
\begin{equation}\label{clt_with_second_largest}
\lim_{n \to \infty}
 \mathbb{P}_{n, p(t,n)}\left[ \, |\mathcal{C}_1| \in [k_{n,a},k_{n,b}], \, |\mathcal{C}_2|< k_{n,a} \,
  \right] 
  = \frac{1}{\theta} \lim_{n \to \infty}
 \mathbb{P}_{n, p(t,n)}\left[ \, |\mathcal{C}| \in [k_{n,a},k_{n,b}] \,\right].
\end{equation}
Now by Lemma \ref{lemma_fixed_component_clt} the right-hand side of \eqref{clt_with_second_largest} is 
$\Phi\left(b\right)-\Phi\left(a \right)$. This equation readily implies 
$\liminf_{n \to \infty}
 \mathbb{P}_{n, p(t,n)}\left[ \, |\mathcal{C}_2|< k_{n,a} \,
  \right] \geq \Phi\left( b \right)-\Phi\left( a \right)$
  for any $a \leq  b \in \mathbb{R}$, which in turn implies 
  $\lim_{n \to \infty}
 \mathbb{P}_{n, p(t,n)}\left[ \, |\mathcal{C}_2|< k_{n,a} \,
  \right]=1$ for any $a \in \mathbb{R}$. Combining this with Lemma \ref{lemma_fixed_component_clt} and
  \eqref{clt_with_second_largest} we obtain that  Theorem \ref{thm_CLT_giant} indeed holds.
  
  In order to prove \eqref{clt_with_second_largest} we observe that if $k \in [k_{n,a},k_{n,b}]$, then
  \begin{equation}\label{remco_trick}
  \mathbb{P}_{n,p}\left[ \, |\mathcal{C}_1| =k , \, |\mathcal{C}_2|< k_{n,a} \,
  \right]= \mathbb{P}_{n-k,p}\left[ \,  |\mathcal{C}_1| < k_{n,a} \, \right]
  \frac{n}{k} \mathbb{P}_{n,p}\left[\, |\mathcal{C}|=k  \, \right].
  \end{equation}
  Equation \eqref{remco_trick} is essentially a special case of \cite[Lemma 2.1]{remco_local_limit}, but
  we include the proof of \eqref{remco_trick} here for completeness: if $v \in [n]$ and $\mathcal{C}(v)$ denotes the connected component of $v$ in $\mathcal{G}_{n,p}$ and $|\mathcal{C}^*(v)|$ denotes the size of the largest 
  connected component of $\mathcal{G}_{n,p} \setminus \mathcal{C}(v)$ then
\begin{multline*}
\mathbb{P}_{n,p}\left[ \, |\mathcal{C}_1| =k , \, |\mathcal{C}_2|< k_{n,a} \,
  \right] =  \frac{1}{k} \sum_{v=1}^n 
  \mathbb{P}_{n,p}\left[ \, |\mathcal{C}_1| =k , \, v \in \mathcal{C}_1, \, |\mathcal{C}_2|< k_{n,a} \, 
  \right] \\=
  \frac{1}{k} \sum_{v=1}^n 
  \mathbb{P}_{n,p}\left[ \, |\mathcal{C}(v)|=k , \, |\mathcal{C}^*(v)|< k_{n,a} \,
  \right]=\frac{n}{k} \mathbb{P}_{n,p}\left[ \, |\mathcal{C}(1)|=k , \, |\mathcal{C}^*(1)|< k_{n,a} \,
  \right]\\=
 \frac{n}{k}  \mathbb{P}_{n,p}\left[\, |\mathcal{C}|=k  \, \right]  
  \mathbb{P}_{n-k,p}\left[ \,  |\mathcal{C}_1| < k_{n,a} \, \right].
\end{multline*}  
This proves  \eqref{remco_trick}. Next we show that 
\begin{equation}\label{second_max_small}
\lim_{n \to \infty} \min_{k \in [k_{n,a},k_{n,b}]} \mathbb{P}_{n-k,p(t,n)}\left[ \,  |\mathcal{C}_1| < k_{n,a} \, \right]=1.
\end{equation}
Let us denote $\widetilde{n}=n-k_{n,a}$.
 For any $k \in [k_{n,a},k_{n,b}]$ we have
\begin{equation}\label{second_max_bounds}
\mathbb{P}_{n-k,p}\left[ \,  |\mathcal{C}_1| \geq k_{n,a} \, \right] \leq
\frac{n-k}{k_{n,a}} \mathbb{P}_{n-k,p}\left[ \,  |\mathcal{C}| \geq k_{n,a} \, \right] \leq 
\frac{n }{k_{n,a}}
\mathbb{P}_{\widetilde{n},p}\left[ \,  |\mathcal{C}| \geq k_{n,a} \, \right] \leq
\frac{n \mathbb{E}_{\widetilde{n},p}\left[ \,  |\mathcal{C}| \, \right]}{(k_{n,a})^2}.
\end{equation}

Now we observe that $\mathcal{G}_{\widetilde{n},p(t,n)}$ is a \emph{subcritical} \ER graph, since
\[\lim_{n \to \infty} \widetilde{n} \cdot p(t,n) \stackrel{ \eqref{p_t_n_def}, \eqref{kna} }{=}
\lim_{n \to \infty} \left( n- \lfloor \theta n + a \cdot \sigma \sqrt{n} \rfloor \right) \cdot (1-e^{-t/n})
=
(1-\theta)t \stackrel{ \eqref{varphi_prime_theta} }{<}1.\]
Note that $\mathbb{E}_{\widetilde{n},p} \left[ |\mathcal{C}| \right]$ remains bounded as $n \to \infty$
 by \eqref{eq_bp_moment_bounds}, hence
 \eqref{second_max_small}  follows from \eqref{second_max_bounds}.
 
We are now ready to prove \eqref{clt_with_second_largest}:
\begin{multline}
\lim_{n \to \infty}
 \mathbb{P}_{n, p(t,n)}\left[ \, |\mathcal{C}_1| \in [k_{n,a},k_{n,b}], \, |\mathcal{C}_2|< k_{n,a} \,
  \right] \\ \stackrel{ \eqref{remco_trick} }{=} \lim_{n \to \infty} \sum_{k=k_{n,a}}^{k_{n,b}} \mathbb{P}_{n-k,p(t,n)}\left[ \,  |\mathcal{C}_1| < k_{n,a} \, \right]
  \frac{n}{k} \mathbb{P}_{n,p(t,n)}\left[\, |\mathcal{C}|=k  \, \right] \\
  \stackrel{ \eqref{kna}, \eqref{second_max_small} }{=}
  \frac{1}{\theta} \lim_{n \to \infty}
 \mathbb{P}_{n, p(t,n)}\left[ \, |\mathcal{C}| \in [k_{n,a},k_{n,b}] \,\right].
\end{multline}

This completes the proof of Theorem \ref{thm_CLT_giant} given Lemma \ref{lemma_fixed_component_clt}.
\end{proof}


We will deduce Lemma \ref{lemma_fixed_component_clt} from Lemmas \ref{lemma_C_falls_in_first_interval_with_one_minus_theta} and \ref{lemma_clt_mgf_widetilde_I_n} below.

Let us subdivide the interval $[n]$ into five disjoint sub-intervals:
\begin{align} \label{I_J_K_intervals_def}
 &I_n = [1, n^{1/4}  ), \quad J_n=[n^{1/4},n^{3/4}), \quad
 K_{n} =[ n^{3/4}  , \theta n - n^{5/8} ),
 \\
\label{widetilde_I_K_intervals_def}
 &\widetilde{I}_{n} = [ \theta n - n^{5/8}, \theta n + n^{5/8} ),
 \quad
 \widetilde{K}_{n} = [ \theta n + n^{5/8}, n ].
\end{align}

Note that the choice of the exponents $\frac14$, $\frac34$ and $\frac58$ above is somewhat arbitrary.
Also note that $I_n$ and $\widetilde{I}_n$ are the important intervals, while $J_n$, $K_n$ and $\widetilde{K}_{n}$ are insignificant, i.e., we will see that $|\mathcal{C}| \in I_n \cup \widetilde{I}_n$
with high probability. The only reason behind the distinction between $J_n$ and $K_n$ is that
we will use different methods to show that $J_n$ and $K_n$ are insignificant.

\begin{lemma}\label{lemma_C_falls_in_first_interval_with_one_minus_theta}
 We have 
\begin{equation}\label{C_falls_in_first_interval_with_one_minus_theta}
\lim_{n \to \infty} \mathbb{P}_{n, p(t,n)} \left( |\mathcal{C}| \in I_n \right)=1-\theta.
\end{equation}
\end{lemma}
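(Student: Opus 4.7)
The approach is to apply the moment-generating identity \eqref{mgf_ER_positive_f} at the integer-lattice point $\lambda_n := -\lfloor \theta n\rfloor / n \in \frac{\Z}{n} \cap (-1, 0)$. Since $(1+\lambda_n)n = n - \lfloor\theta n\rfloor \leq n$, the event $\{|\mathcal{C}| > n\}$ has probability zero in $\mathcal{G}_{(1+\lambda_n)n, p(t,n)}$, and \eqref{mgf_ER_positive_f} simplifies to the clean identity
\[\mathbb{E}_{n,p(t,n)}\bigl[f_{n,t}(\lambda_n, |\mathcal{C}|)\bigr] = 1 + \lambda_n = 1 - \frac{\lfloor\theta n\rfloor}{n} \xrightarrow{n \to \infty} 1 - \theta.\]
The choice of $\lambda_n$ is dictated by \eqref{theta_def}: the identity $e^{t\theta}(1-\theta) = 1$ makes the $i=0$ factor $e^{-\lambda_n t}(1+\lambda_n)$ in \eqref{f_n_lambda_k} asymptotically equal to $1$, which is precisely why $1 - \theta$ emerges on the right-hand side.

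The plan is then to show that $k \mapsto f_{n,t}(\lambda_n, k)$ serves as an asymptotic indicator of $\{k \in I_n\}$ in the expectation, so that $\mathbb{P}(|\mathcal{C}| \in I_n) \to 1-\theta$. Concretely, I will establish three properties of $f_{n,t}(\lambda_n, \cdot)$: (a) $f_{n,t}(\lambda_n, k) = 1 + o(1)$ uniformly for $k \in I_n$; (b) $f_{n,t}(\lambda_n, k) \geq 0$ for all $k$, with $f_{n,t}(\lambda_n, k) = 0$ for $k > m := n - \lfloor \theta n\rfloor$; (c) the pointwise upper bound $f_{n,t}(\lambda_n, k) \leq \exp(-c\, k^2/n)$ on $[1, m]$ for an explicit $c = c(t) > 0$. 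Part (a) is a direct Taylor expansion of \eqref{f_n_lambda_k}: by \eqref{theta_def}, each factor for $0 \leq i < k \leq n^{1/4}$ equals $1 + O((i+1)/n)$, so the $k$-fold product is $1 + O(k^2/n) = 1 + o(1)$. Part (b) is by inspection of \eqref{g_n_p_k}: the factor at $i = m$ vanishes because $n - m + j = 0$. Part (c) relies on a concavity argument for
\[G(u, \theta) := t\theta + \log\!\left(\tfrac{1-u-\theta}{1-u}\right),\]
for which one checks $G(0, \theta) = 0$, $\partial_u G(0, \theta) = -\theta/(1-\theta) < 0$, and $\partial_u^2 G(\cdot, \theta) < 0$ on $[0, 1-\theta)$; concavity then gives $G(u, \theta) \leq -\theta u/(1-\theta)$, and summing $\log f_{n,t}(\lambda_n, k) = \sum_{i=0}^{k-1} G(i/n, \theta_n)$ with $\theta_n := \lfloor\theta n\rfloor/n$ produces the stated bound, up to an $O(k/n)$ correction absorbing $\theta_n \neq \theta$.

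Combining (a) with the identity above gives $\mathbb{E}[f_{n,t}(\lambda_n, |\mathcal{C}|)\mathbf{1}_{\{|\mathcal{C}| \in I_n\}}] = (1+o(1))\mathbb{P}(|\mathcal{C}| \in I_n)$, which together with (b) already yields the upper bound $\mathbb{P}(|\mathcal{C}|\in I_n) \leq 1-\theta + o(1)$. The hard part will be the matching lower bound, equivalently $\mathbb{E}[f_{n,t}(\lambda_n, |\mathcal{C}|)\mathbf{1}_{\{|\mathcal{C}|\notin I_n\}}] = o(1)$. The pointwise bound (c) handles the "tail" range $|\mathcal{C}| \geq A\sqrt{n}$, whose contribution is at most $e^{-cA^2}$ (arbitrarily small for large $A$); it also handles the range $|\mathcal{C}| \in [A\sqrt{n}, m]$, on which $f$ is uniformly $O(e^{-cA^2})$; and property (b) makes the range $|\mathcal{C}| > m$ contribute zero. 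The main obstacle is the middle range $|\mathcal{C}| \in [n^{1/4}, A\sqrt{n}]$, where (c) only gives $f_{n,t}(\lambda_n, \cdot) = O(1)$. To close the argument on this range I would apply Proposition \ref{prop_ER_mgf_general} a second time at a suitably chosen integer-lattice value of $\lambda$ for which the corresponding $f$ is exponentially small precisely on $[n^{1/4}, A\sqrt{n}]$, thereby deriving $\mathbb{P}(|\mathcal{C}| \in [n^{1/4}, A\sqrt{n}]) = o(1)$ and hence \eqref{C_falls_in_first_interval_with_one_minus_theta}.
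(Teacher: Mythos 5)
Your setup --- applying \eqref{mgf_ER_positive_f} at a lattice point $\lambda_n\approx-\theta$ so that the right-hand side is exactly $1+\lambda_n\to 1-\theta$, checking that $f_{n,t}(\lambda_n,\cdot)=1+o(1)$ on $I_n$, that it is nonnegative and vanishes beyond $n-\lfloor\theta n\rfloor$, and that it is at most $\exp(-ck^2/n)$ in between --- is exactly the skeleton of the paper's proof (compare \eqref{split_components_theta}, \eqref{f_n_minus_theta}, \eqref{small_components_theta} and \eqref{big_components_theta}), and those parts are correct. The genuine gap is the step you yourself flag as the main obstacle, the range $n^{1/4}\le|\mathcal{C}|\le A\sqrt{n}$, where the mechanism you propose is backwards: you want a lattice value of $\lambda$ for which $f_{n,t}(\lambda,\cdot)$ is exponentially \emph{small} on that range, and to deduce from Proposition \ref{prop_ER_mgf_general} that $\mathbb{P}(|\mathcal{C}|\in[n^{1/4},A\sqrt{n}])=o(1)$. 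But the proposition only gives you the value (hence the upper bound $1+\lambda$) of $\mathbb{E}[f_{n,t}(\lambda,|\mathcal{C}|)]$ for a nonnegative integrand, and from such information one can bound $\mathbb{P}(|\mathcal{C}|\in R)$ from above only by Markov's inequality, i.e.\ only when $f_{n,t}(\lambda,\cdot)$ is \emph{large} on $R$. If $f$ is exponentially small on $R$, then $\mathbb{E}[f;R]$ is small whether $\mathbb{P}(R)$ is $0$ or $1$, so nothing follows; nor is there a rescue of the form ``$f\le1$ everywhere and $\mathbb{E}[f]$ is close to $1$'', since here $\mathbb{E}[f]\approx1-\theta$ is bounded away from $1$.

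The correct choice, and the one the paper makes in \eqref{x_tilde_bound}--\eqref{medium_components_theta}, is $\widetilde{\lambda}=\frac1t-1$, the maximizer of $e^{-\lambda t}(1+\lambda)$, with maximal value $\widetilde{x}=\frac1t e^{t-1}>1$ (see \eqref{widetilde_lambda}). For $k\le n^{3/4}$ every factor of $f_{n,t}(\widetilde{\lambda}^*_n,k)$ is at least $\frac{1+\widetilde{x}}{2}>1$, so $f_{n,t}(\widetilde{\lambda}^*_n,k)\ge\bigl(\frac{1+\widetilde{x}}{2}\bigr)^{n^{1/4}}$ on the whole troublesome range, while \eqref{mgf_ER_positive_f} bounds $\mathbb{E}\bigl[f_{n,t}(\widetilde{\lambda}^*_n,|\mathcal{C}|)\bigr]$ by $1+\widetilde{\lambda}^*_n$; Markov's inequality then yields $\mathbb{P}(|\mathcal{C}|\in[n^{1/4},n^{3/4}])\le e^{-\Omega(n^{1/4})}$, which closes your lower bound (and in fact lets you drop the auxiliary parameter $A$ altogether, since your bound (c) at $k=\lceil n^{3/4}\rceil$ already gives $e^{-\Omega(\sqrt{n})}$ for the remaining range, as in \eqref{big_components_theta}). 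With this one substitution --- $f$ large, not small, on the bad range --- the rest of your argument is sound and essentially reproduces the paper's proof; as written, the decisive step would not go through.
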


\begin{lemma}\label{lemma_clt_mgf_widetilde_I_n}
For any $\alpha \in \R$ we have
\begin{equation}\label{clt_mgf_widetilde_I_n}
\lim_{n \to \infty} \left( 
\mathbb{P}_{n, p(t,n)} \left( |\mathcal{C}| \in I_n \right)+  
\mathbb{E}_{n, p(t,n)} \left( \exp\left(\alpha \varphi'(\theta) \frac{|\mathcal{C}|-\theta n }{\sqrt{n}}
- \frac{\alpha^2}{2} \frac{\theta}{1-\theta} \right)
 \, ; \;
 |\mathcal{C}| \in \widetilde{I}_n   \right) \right)= 
 1.
\end{equation}
\end{lemma}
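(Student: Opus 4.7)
The plan is to apply identity \eqref{mgf_ER_positive_f} at $\lambda=\lambda_n:=\lfloor\alpha\sqrt{n}\rfloor/n$, so that $\lambda_n\in(\mathbb{Z}/n)\cap(-1,+\infty)$ and $\sqrt{n}\lambda_n\to\alpha$, and then decompose the left-hand side $\mathbb{E}[f_{n,t}(\lambda_n,|\mathcal{C}|)]$ along the partition $[n]=I_n\cup J_n\cup K_n\cup\widetilde{I}_n\cup\widetilde{K}_n$ of \eqref{I_J_K_intervals_def}--\eqref{widetilde_I_K_intervals_def}. The identity yields
\[
\mathbb{E}[f_{n,t}(\lambda_n,|\mathcal{C}|)]=(1+\lambda_n)\bigl(1-\mathbb{P}_{(1+\lambda_n)n,p(t,n)}[|\mathcal{C}|>n]\bigr),
\]
whose right-hand side tends to $1$: for $\alpha<0$ we trivially have $(1+\lambda_n)n<n$; for $\alpha>0$ the graph is supercritical on $(1+\lambda_n)n$ vertices with giant of size $\approx\theta(1+\lambda_n)n<n$, and the probability that a fixed vertex's component exceeds $n$ can be bounded by $o(1)$ via a further application of \eqref{mgf_ER_positive_f} at a fixed positive $\mu$, which yields exponential-in-$n$ tail bounds on $|\mathcal{C}|$.

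The cornerstone is a uniform Taylor expansion of $\ln f_{n,t}(\lambda_n,k)$ on the $\sqrt{n}$-window $\widetilde{I}_n$, presumably the referenced \eqref{giant_square_root_window}. Expanding
\[
-\lambda_n t+\ln\!\left(1+\tfrac{\lambda_n}{1-i/n}\right)=\lambda_n\varphi'(i/n)-\tfrac{\lambda_n^2}{2(1-i/n)^2}+O(\lambda_n^3),
\]
summing in $i$ and replacing the two Riemann sums by integrals (valid because $1-i/n$ is bounded away from $0$ throughout $\widetilde{I}_n$), I expect
\[
\ln f_{n,t}(\lambda_n,k)=\lambda_n n\,\varphi(k/n)-\tfrac{\lambda_n^2 n}{2}\cdot\tfrac{k/n}{1-k/n}+o(1).
\]
A further Taylor expansion of $\varphi$ about $\theta$ (using $\varphi(\theta)=0$, $|k/n-\theta|\le n^{-3/8}$ and $n\lambda_n^2\to\alpha^2$) then produces
\[
f_{n,t}(\lambda_n,k)=\exp\!\left(\alpha\varphi'(\theta)\tfrac{k-\theta n}{\sqrt{n}}-\tfrac{\alpha^2}{2}\cdot\tfrac{\theta}{1-\theta}\right)(1+o(1))
\]
uniformly for $k\in\widetilde{I}_n$. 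The same calculation on $I_n$ (where $k\le n^{1/4}$) collapses to $\ln f_{n,t}(\lambda_n,k)=O(k\lambda_n)=o(1)$, so the $I_n$-contribution to $\mathbb{E}[f_{n,t}(\lambda_n,|\mathcal{C}|)]$ is $\mathbb{P}(|\mathcal{C}|\in I_n)+o(1)$.

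The hard part will be showing that the contributions from $J_n$, $K_n$, $\widetilde{K}_n$ vanish, because $f_{n,t}(\lambda_n,k)$ is not uniformly bounded (it can grow like $e^{\Theta(n^{1/8})}$ at the boundary of $\widetilde{I}_n$ and faster outside), so a naive $\sup f\cdot\mathbb{P}$ estimate fails. My plan is to extract exponential tail bounds on $\mathbb{P}[|\mathcal{C}|=k]$ by re-applying \eqref{mgf_ER_positive_f} at auxiliary $\mu_n\in\mathbb{Z}/n$ tuned to each range: for mesoscopic $k\in J_n$ take $\mu_n\to 1/t-1<0$ (the maximizer of $\mu\mapsto -\mu t+\ln(1+\mu)$), which yields $f_{n,t}(\mu_n,k)\ge e^{k(t-1-\ln t)(1+o(1))}$ uniformly; for macroscopic $k\in K_n$ (resp.\ $\widetilde{K}_n$) take $\mu_n$ a small negative (resp.\ positive) constant, so that $\mu_n n\,\varphi(k/n)$ has the right sign and magnitude $n$ (resp.\ $n^{5/8}$). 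The Markov-type bounds $\mathbb{P}[|\mathcal{C}|=k]\le(1+\mu_n)/f_{n,t}(\mu_n,k)$ then beat the worst-case growth of $f_{n,t}(\lambda_n,k)$, and summing over $k$ gives $o(1)$. Assembling the three pieces, $\mathbb{E}[f_{n,t}(\lambda_n,|\mathcal{C}|)]\to 1$ becomes exactly \eqref{clt_mgf_widetilde_I_n}.
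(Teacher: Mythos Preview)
Your overall strategy---apply \eqref{mgf_ER_positive_f} at $\lambda_n=\lfloor\alpha\sqrt{n}\rfloor/n$, split the expectation over the five intervals, use the Taylor expansion of $\ln f_{n,t}$ to identify the $I_n$- and $\widetilde{I}_n$-contributions, and compare to $f_{n,t}(\mu_n,\cdot)$ with auxiliary $\mu_n$ to kill $J_n,K_n,\widetilde{K}_n$---is exactly the paper's. Your handling of $I_n$, $\widetilde{I}_n$ and $J_n$ (with $\mu_n\to 1/t-1=\widetilde{\lambda}$) coincides with the paper's proof; the paper bounds the comparison expectation directly rather than passing through a pointwise Markov inequality and then summing, but that difference is cosmetic.

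The gap is in your treatment of $K_n$ and $\widetilde{K}_n$: a \emph{fixed} small nonzero constant $\mu_n$ does not work. Your justification ``$\mu_n n\,\varphi(k/n)$ has the right sign and magnitude'' keeps only the leading term of your own expansion; the next term $-\tfrac{1}{2}\mu_n^2 n\cdot\tfrac{k/n}{1-k/n}$ is, for fixed $\mu_n\ne 0$, also of order $n$, and since $\varphi(k/n)\to 0$ as $k/n\to\theta$ it dominates near the inner boundary of $K_n$ and of $\widetilde{K}_n$. There one has $f_{n,t}(\mu_n,k)\le e^{-c\mu_n^2 n}$, so your Markov bound $\mathbb{P}[|\mathcal{C}|=k]\le(1+\mu_n)/f_{n,t}(\mu_n,k)$ is vacuous and the sum $\sum_k f_{n,t}(\lambda_n,k)/f_{n,t}(\mu_n,k)$ diverges rather than being $o(1)$. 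The paper's remedy is to take $\mu_n$ of order $1/\sqrt{n}$, namely $(\alpha-1)^{**}_n/\sqrt{n}$ on $K_n$ and $(\alpha+1)^{**}_n/\sqrt{n}$ on $\widetilde{K}_n$: then the quadratic term is $O(1)$ and the ratio becomes $f_{n,t}(\lambda_n,k)/f_{n,t}(\mu_n,k)=\exp\bigl(\pm\sqrt{n}\,\varphi(k/n)+O(1)\bigr)\le e^{-\Omega(n^{1/8})}$ uniformly on the respective interval. A second point you do not address: on $\widetilde{K}_n$ the Taylor expansion is not uniformly valid because $1-k/n$ can be arbitrarily small; the paper handles this by bounding the ratio at the left endpoint of $\widetilde{K}_n$ via the expansion and then checking that each subsequent multiplicative factor in the ratio is at most $1$.
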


Before we prove Lemmas \ref{lemma_C_falls_in_first_interval_with_one_minus_theta} and
\ref{lemma_clt_mgf_widetilde_I_n}, let us deduce Lemma \ref{lemma_fixed_component_clt} from them.

\begin{proof}[Proof of Lemma \ref{lemma_fixed_component_clt}]
First note that $\lim_{n \to \infty}\mathbb{P}_{n, p(t,n)} \left( |\mathcal{C}| \in I_n \cup \widetilde{I}_n \right)=1$ follows from the $\alpha=0$ case of \eqref{clt_mgf_widetilde_I_n}. Combining this with
\eqref{C_falls_in_first_interval_with_one_minus_theta} we obtain
\begin{equation}
\label{C_falls_in_widetilde_I_with__theta}
\lim_{n \to \infty} \mathbb{P}_{n, p(t,n)} \left( |\mathcal{C}| \in \widetilde{I}_n \right)=\theta.
\end{equation}

Denote by
$\mu_n$ the conditional distribution of $\frac{|\mathcal{C}|-\theta n}{\sqrt{n}}$ given 
$|\mathcal{C}| \in \widetilde{I}_n$. We have
\begin{equation}\label{mu_n_mgf_converges}
\lim_{n \to \infty}
 \int \exp\left(\alpha \varphi'(\theta) x \right) \mathrm{d} \mu_n(x)
 \stackrel{\eqref{C_falls_in_first_interval_with_one_minus_theta}, \eqref{clt_mgf_widetilde_I_n},
 \eqref{C_falls_in_widetilde_I_with__theta} }{=} 
\exp\left( \frac{\alpha^2}{2} \frac{\theta}{1-\theta} \right), \qquad \alpha \in \R.
\end{equation}
The r.h.s.\ of \eqref{mu_n_mgf_converges} is
the  moment generating
function of $\mathcal{N}\left(0, \frac{\theta}{1-\theta}\right)$,
thus it classically follows from \eqref{mu_n_mgf_converges}  that $\mu_n$ weakly converges  to $\mathcal{N}\left(0, \sigma^2\right)$ as $n \to \infty$,
where $\sigma$ appears in \eqref{clt_giant_formula}.
Together with \eqref{C_falls_in_first_interval_with_one_minus_theta} and \eqref{C_falls_in_widetilde_I_with__theta} this
 implies  Lemma \ref{lemma_fixed_component_clt}, given 
 Lemmas \ref{lemma_C_falls_in_first_interval_with_one_minus_theta} and
\ref{lemma_clt_mgf_widetilde_I_n}.
\end{proof}


We will prove Lemma \ref{lemma_C_falls_in_first_interval_with_one_minus_theta} in Section \ref{subsection_proof_lemma_C_falls_in_first_interval_with_one_minus_theta}
and Lemma \ref{lemma_clt_mgf_widetilde_I_n} in Section \ref{subsection_proof_of_lemma_clt_mgf_widetilde_I_n}.
The proofs will make excessive use of \eqref{mgf_ER_positive_f}. Let us now introduce some 
notation that will be used throughout.

For any $\lambda \in (-1, +\infty)$ and any $n \in \N$ let us define
\begin{equation}\label{star_n}
\lambda^*_n=\frac{1}{n} \lfloor n \lambda \rfloor. 
\end{equation}
Now $\lambda^*_n  \in \frac{\Z}{n} \cap (-1,+\infty)$, which is required if we want to use
\eqref{mgf_ER_positive_f}.

Having fixed $t>1$, we note that $\lambda^*_n$ approximates $\lambda$ well, i.e., we have
\begin{equation}\label{f_n_lambda_perturb_compare}
f_{n,t}(\lambda^*_n,k) 
\stackrel{ \eqref{f_n_lambda_k} }{=} 
f_{n,t}(\lambda,k)\exp \left(\mathcal{O}\left( \frac{k}{n} \right) \right), 
\quad e^{-t}-1 \leq \lambda \leq 1, \quad 1 \leq k \leq \frac{e^{-t}}{2}n.
\end{equation}

We will often implicitly use that for any $\lambda >-1$ we have 
\begin{equation}
f_{n,t}(\lambda^*_n,k)=0 \quad \text{if} \quad n +  \lfloor n \lambda \rfloor  < k \leq n
\quad \text{and} \quad
f_{n,t}(\lambda^*_n,k) \geq 0 \quad \text{if} \quad k \in [n].
\end{equation}

Having fixed $t>1$, we note that if we let 
\begin{equation}\label{widetilde_lambda}
 \widetilde{\lambda}:=\frac{1}{t}-1 \quad \text{then we have} 
  \quad \widetilde{x}:=\max_{\lambda}e^{-\lambda t}(1+\lambda)= e^{-\widetilde{\lambda}t}(1+\widetilde{\lambda})=\frac{1}{t}e^{t-1}\stackrel{(*)}{>}1,
\end{equation}
where $(*)$  follows from the inequality $e^x>1+x$ applied to $x=t-1$.

 In Sections \ref{subsection_proof_lemma_C_falls_in_first_interval_with_one_minus_theta} and \ref{subsection_proof_of_lemma_clt_mgf_widetilde_I_n} we will dominate $f_{n,t}(\lambda,k)$
 by $f_{n,t}(\widetilde{\lambda}^*_n,k)$ for $k \in J_n$ (defined in \eqref{I_J_K_intervals_def})  in order to
show that ``nothing interesting happens'' in the interval $J_n$.

We will write $f(n)=\Omega\left( g(n) \right)$ if there
exists a constant $c>0$ (that may depend on $t$) such that $f(n) \geq c g(n)$ for any $n \in \N$.

\subsection{ Proof of Lemma \ref{lemma_C_falls_in_first_interval_with_one_minus_theta}}
\label{subsection_proof_lemma_C_falls_in_first_interval_with_one_minus_theta}

Before we outline the strategy of the proof of Lemma \ref{lemma_C_falls_in_first_interval_with_one_minus_theta} in the paragraph below \eqref{split_components_theta},
we need to introduce some notation. Let us abbreviate 
\[X= f_{n,t}( -\theta  ,|\mathcal{C}| ) \qquad
 \text{and} \qquad X^* \stackrel{ \eqref{star_n} }{=} f_{n,t}( (- \theta )^*_n  ,|\mathcal{C}| ).
 \]
 Recalling the definition of the intervals $I_n$ and $J_n$ from \eqref{I_J_K_intervals_def}, we have
\begin{equation}\label{split_components_theta}
 1+(- \theta )^*_n 
\stackrel{ \eqref{mgf_ER_positive_f} }{=}
  \mathbb{E} \left[\, X^*
   ; \, |\mathcal{C}| \in I_n \right]+
  \mathbb{E} \left[\, X^*
   ; \,   |\mathcal{C}| \in J_n  \right] +
 \mathbb{E} \left[\,  X^*
   ; \,  n^{3/4} \leq |\mathcal{C}|  \right].
\end{equation}
We will estimate the  three terms on the r.h.s.\ of \eqref{split_components_theta}.
We will show that the first term  approximates $\mathbb{P} \left( |\mathcal{C}| \in I_n \right)$
as $n \to \infty$, while the second and third terms vanish as $n \to \infty$.

\begin{proof}[Proof of Lemma \ref{lemma_C_falls_in_first_interval_with_one_minus_theta}]
Before we start estimating the three terms of \eqref{split_components_theta}, we observe
\begin{equation}\label{f_n_minus_theta}
 f_{n,t}(-\theta,k)\stackrel{  \eqref{f_n_lambda_k}, \eqref{theta_def}  }{=}
\prod_{i=0}^{k-1}
 \left( 1- \frac{\theta}{1-\theta} \frac{ \frac{i}{n}}{1-\frac{i}{n}} \right), \qquad
 k \in [n].
\end{equation}

Note that $(-\theta)^*_n > e^{-t}-1$ for large enough $n$, 
since $\theta<1-e^{-t}$ by \eqref{varphi_def} and \eqref{theta_def},
so  we can apply \eqref{f_n_lambda_perturb_compare} in \eqref{small_components_theta} and \eqref{big_components_theta} below. Now we bound the three terms of \eqref{split_components_theta}.

\medskip

{\bf First term:}
\begin{equation}
\label{small_components_theta}
 \mathbb{E} \left[\, X^*
   ; \, |\mathcal{C}| \in I_n \right]
\stackrel{  \eqref{I_J_K_intervals_def}, \eqref{f_n_lambda_perturb_compare} }{=}
 \mathbb{E} \left[\, X e^{\mathcal{O}\left( n^{-3/4} \right)} 
  \, ; \, |\mathcal{C}| \in I_n \right]
  \stackrel{ \eqref{I_J_K_intervals_def},  \eqref{f_n_minus_theta}   }{=}
  \mathbb{P} \left( |\mathcal{C}| \in I_n \right) +\mathcal{O}\left( \frac{1}{\sqrt{n}} \right).
\end{equation}

{\bf Second term} ($\mathbb{E} \left[\, X^*   ; \, |\mathcal{C}| \in J_n  \right]$):
\begin{equation}\label{x_tilde_bound}
e^{-(-\theta)^*_n t} \left( 1+\frac{ (-\theta)^*_n }{1-\frac{i}{n}} \right) 
\stackrel{ \eqref{theta_def}, \eqref{star_n}, \eqref{widetilde_lambda}   }{\leq}
\left( \frac{1+\widetilde{x}}{2} \right)^{-1} e^{-\widetilde{\lambda}^*_n t}
\left(1+\frac{\widetilde{\lambda}^*_n }{1-\frac{i}{n}}\right),
\qquad 1 \leq i \leq n^{3/4},
\end{equation}
\begin{multline}\label{medium_components_theta}
\mathbb{E} \left[\, X^*   ; \, |\mathcal{C}| \in J_n  \right]
\stackrel{\eqref{f_n_lambda_k}, \eqref{I_J_K_intervals_def}, \eqref{x_tilde_bound} }{\leq }
\mathbb{E} \left[ \left( \frac{1+\widetilde{x}}{2} \right)^{-|\mathcal{C}|}
 f_{n,t}( \widetilde{\lambda}^*_n  ,|\mathcal{C}| ) ; \,  |\mathcal{C}| \in J_n \right]
  \\ \stackrel{ \eqref{I_J_K_intervals_def} }{\leq}
 \left( \frac{1+\widetilde{x}}{2} \right)^{-n^{1/4}}
 \mathbb{E} \left[ 
 f_{n,t}( \widetilde{\lambda}^*_n  ,|\mathcal{C}| )  \right]
 \stackrel{ \eqref{mgf_ER_positive_f} }{=} \left( \frac{1+\widetilde{x}}{2} \right)^{-n^{1/4}} \left(1+ \widetilde{\lambda}^*_n \right) \stackrel{ \eqref{star_n}, \eqref{widetilde_lambda} }{ \leq } 
 e^{-\Omega(n^{1/4})}.
\end{multline}

{\bf Third term} ($ \mathbb{E}\left[\,  X^*
   ; \,  n^{3/4} \leq |\mathcal{C}|  \right] $):
\begin{equation}\label{less_than_one_in_third}
e^{-(-\theta)^*_n t} \left( 1+\frac{ (-\theta)^*_n }{1-\frac{i}{n}} \right) \leq 
e^{\theta t} (1-\theta) \stackrel{ \eqref{theta_def} }{=} 1 \quad \text{for any} \quad i \geq \lceil n^{3/4}\rceil,
\end{equation}
\begin{multline}
  \label{big_components_theta}
  \mathbb{E} \left[\,  X^*
   ; \,  n^{3/4} \leq |\mathcal{C}|  \right] \stackrel{ \eqref{f_n_lambda_k}, \eqref{less_than_one_in_third} }{\leq} 
  f_{n,t}( (- \theta )^*_n  ,  \lceil n^{3/4}\rceil )\stackrel{ \eqref{f_n_lambda_perturb_compare} }{ = }
  f_{n,t}( -\theta  ,  \lceil n^{3/4}\rceil )e^{\mathcal{O}\left( n^{-1/4} \right)} 
      \\ \stackrel{ \eqref{f_n_minus_theta} }{\leq}
   \exp\left( -\frac{\theta}{1-\theta} \sum_{i=0}^{ \lceil n^{3/4}\rceil -1} \frac{i}{n} \right)
   e^{\mathcal{O}\left( n^{-1/4} \right)}
  \leq e^{-\Omega(\sqrt{n})}.
\end{multline}

The statement of Lemma \ref{lemma_C_falls_in_first_interval_with_one_minus_theta} follows from
   \eqref{split_components_theta}, \eqref{small_components_theta},
  \eqref{medium_components_theta} and \eqref{big_components_theta}. 

\end{proof}

\subsection{Proof of Lemma \ref{lemma_clt_mgf_widetilde_I_n}}
\label{subsection_proof_of_lemma_clt_mgf_widetilde_I_n}

Before we outline the strategy of the proof of Lemma \ref{lemma_clt_mgf_widetilde_I_n} in the paragraph below \eqref{five_terms},
we need to introduce some notation. If we define
\begin{equation}\label{widetilde_alpha}
 \alpha^{**}_n := \frac{ \lfloor \sqrt{n} \alpha \rfloor }{\sqrt{n}}
\quad \text{ then } \quad
\left( \frac{\alpha}{\sqrt{n}} \right)_n^* \stackrel{ \eqref{star_n} }{=}
\frac{\alpha^{**}_n}{\sqrt{n}} 
\quad \text{ and } \quad
 |\alpha^{**}_n -\alpha| \leq \frac{1}{\sqrt{n}}.
\end{equation}

Let us abbreviate 
\[
Y^*=f_{n,t}\left(\frac{\alpha^{**}_n}{\sqrt{n}}, |\mathcal{C}| \right).
\]
Recall the definitions of the five intervals from \eqref{I_J_K_intervals_def} and \eqref{widetilde_I_K_intervals_def}. We have
\begin{multline}\label{five_terms}
\left(1+\frac{\alpha^{**}_n}{\sqrt{n}}\right)
\left(1- \mathbb{P}_{n+\lfloor \sqrt{n} \alpha \rfloor,p}[ \, |\mathcal{C}|>n \, ] \right) 
\stackrel{ \eqref{mgf_ER_positive_f} }{=}
\mathbb{E}[ Y^*; |\mathcal{C}| \in I_{n}   ] +
\mathbb{E}[ Y^*; |\mathcal{C}| \in J_{n}   ] \\+
\mathbb{E}[ Y^*; |\mathcal{C}| \in K_{n}   ] +
\mathbb{E}[ Y^*; |\mathcal{C}| \in \widetilde{I}_{n}   ] +
\mathbb{E}[ Y^*; |\mathcal{C}| \in \widetilde{K}_{n}   ].
\end{multline}
We will estimate the  five terms on the r.h.s.\ of \eqref{five_terms}. 
We will show that the terms corresponding to $I_n$ and $\widetilde{I}_n$ in \eqref{five_terms} approximate
the  terms corresponding to $I_n$ and $\widetilde{I}_n$ in \eqref{clt_mgf_widetilde_I_n} as
$n \to \infty$,
while the terms corresponding to $J_n$, $K_n$ and $\widetilde{K}_n$ in \eqref{five_terms} vanish as $n \to \infty$.

\begin{proof}[Proof of Lemma \ref{lemma_clt_mgf_widetilde_I_n}]
Before we start estimating the five terms of \eqref{five_terms}, we note that if $k \in I_{n} \cup  J_{n} \cup K_n \cup \widetilde{I}_{n}$ then we can use Taylor
expansion of $\ln(1+x)$ to obtain for any $\alpha \in \R$ the formula
\begin{multline}\label{f_n_calculation}
f_{n,t}\left( \frac{\alpha}{\sqrt{n}},k \right)
\stackrel{ \eqref{f_n_lambda_k} }{=}
\exp\left( -\frac{\alpha}{\sqrt{n}}kt + 
\sum_{i=0}^{k-1} 
\ln \left( 1+ \frac{ \frac{\alpha}{\sqrt{n} } }{1-\frac{i}{n}} \right)\right)
\\=
\exp\left( 
  -\frac{\alpha}{\sqrt{n}} kt + \sum_{i=0}^{k-1} \frac{\alpha}{\sqrt{n}} \frac{1}{1-\frac{i}{n}} \
-\frac{1}{2} \sum_{i=0}^{k-1} \frac{\alpha^2}{n} \frac{1}{(1-\frac{i}{n})^2} +\mathcal{O}\left( \frac{1}{\sqrt{n}} \right)  \right)\\=
\exp\left( \frac{\alpha}{\sqrt{n}}\left( -kt + n \int_{0}^{\frac{k}{n}}\frac{1}{1-x}\, \mathrm{d}x \right) 
-\frac{\alpha^2}{2} \int_0^{\frac{k}{n}} \frac{1}{(1-x)^2} \, \mathrm{d}x +
\mathcal{O}\left( \frac{1}{\sqrt{n}} \right)  \right)\\
\stackrel{ \eqref{varphi_def}  }{=}
\exp\left( \alpha\sqrt{n}\varphi\left( \frac{k}{n} \right) 
-\frac{\alpha^2}{2} \frac{\frac{k}{n}}{1-\frac{k}{n}} +
\mathcal{O}\left( \frac{1}{\sqrt{n}} \right)  \right).
\end{multline}

Now we can estimate the five terms on the r.h.s.\ of \eqref{five_terms}.

{\bf First term:}
\begin{equation}\label{small_alpha}
\mathbb{E}[ Y^*; |\mathcal{C}| \in I_{n}   ] \stackrel{ \eqref{varphi_def}, \eqref{I_J_K_intervals_def}, \eqref{f_n_calculation} }{=} \mathbb{P} \left( |\mathcal{C}| \in I_n \right) +\mathcal{O}\left( n^{-1/4} \right).
\end{equation}

{\bf Second term:}
The bound 
\begin{equation}\label{medium_alpha}
\mathbb{E}[ Y^*; |\mathcal{C}| \in J_{n}   ] \leq  e^{-\Omega(n^{1/4})}
\end{equation}
 can be deduced analogously to \eqref{medium_components_theta} using that for large enough $n$ we have
\begin{equation}
e^{-\frac{\alpha^{**}_n}{\sqrt{n}} t} \left( 1+\frac{ \frac{\alpha^{**}_n}{\sqrt{n}} }{1-\frac{i}{n}} \right) 
\stackrel{  \eqref{star_n} , \eqref{widetilde_lambda} }{\leq}
\left( \frac{1+\widetilde{x}}{2} \right)^{-1} e^{-\widetilde{\lambda}^*_n t}
\left(1+\frac{\widetilde{\lambda}^*_n }{1-\frac{i}{n}}\right),
\qquad 1 \leq i \leq n^{3/4}.
\end{equation}

{\bf Third term} ($ \mathbb{E}[ Y^*; |\mathcal{C}| \in K_{n}   ] $): We note 
\begin{equation}\label{K_n_bound_Y_start}
\frac{ f_{n,t}\left( \frac{\alpha^{**}_n}{\sqrt{n}}, k \right)  }
{  f_{n,t}\left( \frac{(\alpha-1)^{**}_n}{\sqrt{n}}, k \right) } 
\stackrel{  \eqref{f_n_calculation} }{ = }
\exp\left( \sqrt{n} \varphi\left( \frac{k}{n} \right) +\mathcal{O}(1) \right)
\stackrel{ \eqref{varphi_def}, \eqref{theta_def}, \eqref{I_J_K_intervals_def} }{\leq} e^{-\Omega(n^{1/8})},
\quad k \in K_n,
\end{equation}
\begin{equation}\label{K_n_expect_bound_middle}
\mathbb{E}[ Y^*; |\mathcal{C}| \in K_{n}   ] \stackrel{ \eqref{K_n_bound_Y_start} }{ \leq }
e^{-\Omega(n^{1/8})} \mathbb{E}\left[ f_{n,t}\left(\frac{(\alpha-1)^{**}_n}{\sqrt{n}}, |\mathcal{C}| \right)  ; |\mathcal{C}| \in K_{n} \right] \stackrel{ \eqref{mgf_ER_positive_f} }{\leq}
2 e^{-\Omega(n^{1/8})}. 
\end{equation}

{\bf Fourth term} ($ \mathbb{E}[ Y^*; |\mathcal{C}| \in \widetilde{I}_{n} ] $): If $x \in [-n^{1/8},n^{1/8}]$,
i.e., if $k=\lfloor \theta n +x \sqrt{n} \rfloor \in \widetilde{I}_{n}$   then
\begin{equation}\label{giant_square_root_window}
f_{n,t} \left(\frac{\alpha}{\sqrt{n}}, k \right) 
\stackrel{  \eqref{theta_def}, \eqref{f_n_calculation}  }{=}
\exp\left( \alpha \varphi'(\theta) x  - \frac{\alpha^2}{2} \frac{\theta}{1-\theta} 
+ \mathcal{O}\left( n^{-1/4 } \right)
\right),
\end{equation}
\begin{equation}\label{f_n_t_widetilde_compare_to_mgf_with_error}
f_{n,t} \left(\frac{\alpha^{**}_n}{\sqrt{n}}, k \right)
\stackrel{ \eqref{widetilde_alpha}, \eqref{giant_square_root_window} }{=}
\exp\left( \alpha \varphi'(\theta) x  - \frac{\alpha^2}{2} \frac{\theta}{1-\theta} 
 \right) +\mathcal{O}\left( n^{-1/4}  f_{n,t} \left(\frac{\alpha^{**}_n}{\sqrt{n}}, k \right) \right),
\end{equation}
\begin{equation}\label{I_n_mgf_expect_estimate}
\mathbb{E}[ Y^*; |\mathcal{C}| \in \widetilde{I}_{n} ]
\stackrel{  \eqref{mgf_ER_positive_f}, \eqref{widetilde_I_K_intervals_def}, \eqref{f_n_t_widetilde_compare_to_mgf_with_error} }{=}
\mathbb{E}\left[ 
\exp\left( \alpha \varphi'(\theta) \frac{ |\mathcal{C}|-\theta n }{\sqrt{n}}  - \frac{\alpha^2}{2} \frac{\theta}{1-\theta} \right);\; |\mathcal{C}| \in \widetilde{I}_{n} \right]+\mathcal{O}\left( n^{-1/4 } \right).
\end{equation}

{\bf Fifth term} ($ \mathbb{E}[ Y^*; |\mathcal{C}| \in \widetilde{K}_n   ] $):
 We observe that 
\begin{equation}\label{K_n_tilde_bound_left_endopint}
\frac{ f_{n,t}\left( \frac{\alpha^{**}_n}{\sqrt{n}}, \lfloor \theta n+ n^{1/8} \sqrt{n} \rfloor \right)  }{  f_{n,t}\left( \frac{(\alpha+1)^{**}_n}{\sqrt{n}}, \lfloor \theta n+ n^{1/8} \sqrt{n} \rfloor \right) } \stackrel{ \eqref{giant_square_root_window} }{=}
\exp\left( -  \varphi'(\theta) n^{1/8} + \mathcal{O}(1) \right)
 \stackrel{ \eqref{varphi_prime_theta} }{\leq} e^{-\Omega(n^{1/8})},
\end{equation}
\begin{equation}\label{K_n_tilde_bound_increment}
\exp\left(-\frac{\alpha^{**}_n}{\sqrt{n}} t\right) \left( 1+\frac{ \frac{\alpha^{**}_n}{\sqrt{n}} }{1-\frac{i}{n}} \right) \stackrel{ \eqref{varphi_prime_theta} }{\leq}
\exp\left(-\frac{(\alpha+1)^{**}_n}{\sqrt{n}} t\right) \left( 1+\frac{ \frac{(\alpha+1)^{**}_n}{\sqrt{n}} }{1-\frac{i}{n}} \right), \quad  \lfloor \theta n+ n^{1/8} \sqrt{n} \rfloor \leq i,
\end{equation}
\begin{equation}\label{K_n_tilde_bound}
f_{n,t} \left(\frac{\alpha^{**}_n}{\sqrt{n}}, k \right)\stackrel{ \eqref{f_n_lambda_k}, \eqref{K_n_tilde_bound_left_endopint}, \eqref{K_n_tilde_bound_increment} }{\leq}
e^{-\Omega(n^{1/8})} f_{n,t} \left(\frac{(\alpha+1)^{**}_n}{\sqrt{n}}, k \right), \quad
\lfloor \theta n+ n^{1/8} \sqrt{n} \rfloor \leq k,
\end{equation}
\begin{equation}\label{K_n_tilde_expect_bound}
\mathbb{E}[ Y^*; |\mathcal{C}| \in \widetilde{K}_n   ] 
\stackrel{ \eqref{widetilde_I_K_intervals_def}, \eqref{K_n_tilde_bound} }{ \leq }
e^{-\Omega(n^{1/8})} \mathbb{E}\left[ f_{n,t}\left(\frac{(\alpha+1)^{**}_n}{\sqrt{n}}, |\mathcal{C}| \right)  ; |\mathcal{C}| \in \widetilde{K}_{n} \right] \stackrel{ \eqref{mgf_ER_positive_f} }{\leq}
2 e^{-\Omega(n^{1/8})}. 
\end{equation}

Finally, the proof of the fact that the error term $\mathbb{P}_{n+\lfloor \sqrt{n} \alpha \rfloor,p}[ \, |\mathcal{C}|>n \, ]$ that appears on the l.h.s.\
of \eqref{five_terms} goes to zero as $n \to \infty$ is analogous to the $\alpha=0$ case of
\eqref{K_n_tilde_expect_bound}.
The statement of  Lemma \ref{lemma_clt_mgf_widetilde_I_n} follows from
\eqref{five_terms}, \eqref{small_alpha}, \eqref{medium_alpha}, \eqref{K_n_expect_bound_middle},
\eqref{I_n_mgf_expect_estimate} and \eqref{K_n_tilde_expect_bound}.
 
 \end{proof}

\medskip 
 
 {\bf Acknowledgements:} The author thanks James Martin, Dominic Yeo and B\'alint T\'oth for valuable discussions and an anonymous referee for thoroughly reading the manuscript and for suggesting to use 
 \cite[Lemma 2.1]{remco_local_limit} in the proof of Theorem \ref{thm_CLT_giant}.
   This work is partially supported by OTKA (Hungarian
National Research Fund) grant K109684, the Postdoctoral Fellowship of
NKFI (National Research, Development and Innovation Office) and the Bolyai Research Scholarship
of the Hungarian Academy of Sciences.

{\footnotesize

}

\end{document}